\theoremstyle{plain}
\newtheorem{theorem}{Theorem}[section]
\newtheorem{lemma}[theorem]{Lemma}
\newtheorem{proposition}[theorem]{Proposition}
\newtheorem{corollary}[theorem]{Corollary}
\theoremstyle{definition}
\theoremstyle{remark}
\newtheorem{remark}[theorem]{Remark}
\newtheorem{notations}[theorem]{Notations}
\newcommand{\ic}{\ensuremath{\mathcal{I}}}
\newcommand{\oc}{\ensuremath{\mathcal{O}}}
\newcommand{\Ps}{\mathbb{P}}
\def\bin #1#2 {\left( \matrix { #1 \cr #2 \cr } \right) }
\begin{document}

\title[Noether-Lefschetz Theory  with base locus]
{Noether-Lefschetz Theory with base locus}

%    Information for first author
\author{Vincenzo Di Gennaro }
%    Address of record for the research reported here
\address{Universit\`a di Roma \lq\lq Tor Vergata\rq\rq, Dipartimento di Matematica,
Via della Ricerca Scientifica, 00133 Roma, Italy.}
\email{digennar@axp.mat.uniroma2.it}
%    \thanks will become a 1st page footnote.
%\thanks{}

%    Information for second author
\author{Davide Franco }
\address{Universit\`a di Napoli
\lq\lq Federico II\rq\rq, Dipartimento di Matematica e
Applicazioni \lq\lq R. Caccioppoli\rq\rq, P.le Tecchio 80, 80125
Napoli, Italy.} \email{davide.franco@unina.it}

\abstract Let $Z$ be a closed subscheme of a  smooth complex
projective  variety $Y\subseteq \Ps^N$, with $\dim\,Y=2r+1\geq 3$.
We describe the intermediate N\'eron-Severi  group  (i.e.  the
image of the cycle map $A_r(X)\to H_{2r}(X;\mathbb{Z})$) of a
general smooth hypersurface $X\subset Y$ of sufficiently large
degree containing $Z$.

\bigskip\noindent {\it{Keywords}}: Noether-Lefschetz Theory, N\'eron-Severi
group, Borel-Moore Homology, Monodromy representation, Isolated
singularities, Blowing-up.

\medskip\noindent {\it{MSC2010}}\,: 14B05, 14C20, 14C21, 14C22, 14C25,
14C30, 14F43, 14F45, 14J70.

\endabstract
\maketitle

\section{Introduction}

The classical Noether-Lefschetz Theorem implies that the
N\'eron-Severi group of a (very) general space surface $X\subset
\mathbb{P}^3=\mathbb{P}^3(\mathbb C)$, with degree $\geq 4$, is
generated by the hyperplane class. The proof  rests on two main
ingredients: a monodromy argument, showing that any class in the
N\'eron-Severi group of $X$ can be lifted to
$H^2(\mathbb{P}^3;\mathbb{Q})$ as a rational class; Lefschetz
Hyperplane Theorem, saying that the map
$H^2(\mathbb{P}^3;\mathbb{Z})\to H^2(X;\mathbb{Z})$ is injective
with torsion-free cokernel.

What can be said, in general, about the $i$-th N\'eron-Severi
group $NS_i(X;\mathbb Z)$ (i.e.  the image of the cycle map
$A_i(X)\to H_{2i}(X;\mathbb{Z})\cong H^{2\dim
X-2i}(X;\mathbb{Z})$, \cite{Fulton}, $\S 19.1$), for a general
hypersurface $X$ of a smooth projective variety $Y$? As far as we know
the more general result in this direction is due to Moishezon
(\cite{Moishezon}, Theorem 5.4, pag. 245), who provided a general
monodromy-type argument concerning the rational N\'eron-Severi
groups $NS_i(X;\mathbb Q)$ ($:= NS_i(X;\mathbb Z)\otimes_{\mathbb
Z} \mathbb{Q}$), from which one deduces that the natural map
$NS_{i+1}(Y;\mathbb Q)\to NS_i(X;\mathbb{Q})$ is surjective, as
soon as $h^{m,0}(X)>h^{m,0}(Y)$ ($m+1:=\dim Y$). Combining with
Lefschetz Hyperplane Theorem, saying that the map
$H^j(Y;\mathbb{Z})\to H^j(X;\mathbb{Z})$ is an isomorphism when
$j< m$ and injective with torsion-free cokernel when $j=m $, the
best one can obtain in general is:
\begin{equation}\label{general}
NS_i(X;\mathbb Z)\subseteq H^{2m-2i}(Y;\mathbb{Z}), \hskip1cm
i\geq \frac{m}{2}.
\end{equation}
So for instance one deduces:
\begin{equation}\label{generalalg}
NS_{i+1}(Y;\mathbb Z)\cong NS_i(X;\mathbb Z), \hskip1cm i\geq
\frac{m}{2}
\end{equation}
when the cohomology of $Y$ is algebraic. Let us say that the
unique hard case to prove is the intermediate one: $m=2i$.

By contrast, very few can be said in general about  the
N\'eron-Severi groups in  dimensions lower than the intermediate
one. For instance, it is still unknown whether  the degree of a
curve on a general threefold in $\mathbb{P}^4$ is a multiple of
the degree of the threefold (\cite{GH}, \cite{MKRR}). So there is
no hope to have a general result in small dimension.

The main purpose of our paper is to extend  (\ref{general}) and
(\ref{generalalg}) above to the general hypersurface containing a
given base locus (compare with \cite{Lopez}, \cite{BSN},
\cite{IJM2}). Let $Y\subseteq \Ps^N$ be a smooth complex
projective variety of dimension $m+1=2r+1\geq 3$, $Z$ be a closed
subscheme of $Y$, and $\delta$ be a positive integer such that
$\mathcal I_{Z,Y}(\delta)$ is generated by global sections. Assume
that for $d\gg 0$ the general divisor $X \in
|H^0(Y,\ic_{Z,Y}(d))|$ is smooth. This implies that $\dim\,Z\leq
r$ and that, for any $d\geq \delta$, there exists a smooth
hypersurface of degree $d$ containing $Z$ \cite{OS}. Our main
result concerns the intermediate N\'eron-Severi group
$NS_r(X;\mathbb Z)$, the higher cases being rather trivial. It
says, roughly speaking, that (\ref{general}) and
(\ref{generalalg}) above are corrected by a group which is freely
generated by the components of the base locus:

\begin{theorem}\label{maintheorem} With notations as above, let $X\in |H^0(Y,\mathcal O_Y(d))|$
be a  general divisor containing $Z$, with $d\geq \delta +1$.
Assume that the vanishing cohomology of $X$ is not of pure Hodge
type $(r,r)$. Denote by $H^m(X;\mathbb Z)_{Z}$ the subgroup of
$H^m(X;\mathbb Z)$ generated by the irreducible components
$Z_1,\dots,Z_{\rho}$ of $Z$ of dimension $r$. Then we have:

\smallskip
(a) $H^m(X;\mathbb Z)_{Z}$ is a free subgroup of $H^m(X;\mathbb
Z)$ of rank $\rho$;

\smallskip
(b) $NS_r(X;\mathbb Z)= \left[NS_r(X;\mathbb Z)\cap H^m(Y;\mathbb
Z)\right]\oplus H^m(X;\mathbb Z)_Z$;

\smallskip
(c) $NS_r(X;\mathbb Q)= NS_{r+1}(Y;\mathbb Q)\oplus H^m(X;\mathbb
Q)_Z$.
\end{theorem}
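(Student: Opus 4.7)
The plan is to reduce to a base-point-free linear system by blowing up $Y$ along $Z$, and then apply a Lefschetz--monodromy argument in the spirit of Moishezon's theorem cited in the introduction. Let $\pi : \tilde{Y} \to Y$ be a principalisation of the ideal sheaf of $Z$ --- a suitable sequence of blow-ups along smooth centres supported on $Z$ --- such that the strict transform of the linear system $|H^0(Y, \mathcal{I}_{Z,Y}(d))|$ is base-point-free on $\tilde{Y}$ for $d \geq \delta + 1$. Each irreducible component $Z_i$ of dimension $r$ yields, via the blow-up formula, a natural class $\zeta_i \in H^{2r}(\tilde{Y};\bZ)$ such that, for a general $X$ containing $Z$, the image of $\zeta_i$ under the composition $H^{2r}(\tilde{Y};\bZ) \to H^{2r}(\tX;\bZ) \xrightarrow{\pi_*} H^{m}(X;\bZ)$ is a nonzero integer multiple of $[Z_i]$; here $\tX$ is the strict transform of $X$, which is smooth, and $\pi_{|\tX}$ is a birational morphism $\tX \to X$.

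Next, I would apply the integral Lefschetz hyperplane theorem to $\tX \hookrightarrow \tilde{Y}$: the restriction $H^m(\tilde{Y};\bZ) \to H^m(\tX;\bZ)$ is injective with torsion-free cokernel. Since the family of $\tX$ is now parametrised by an open subset of a base-point-free linear system on $\tilde{Y}$, the standard monodromy argument for the vanishing cohomology of $\tX$ applies, and the hypothesis that the vanishing cohomology is not pure of type $(r,r)$ guarantees that no nonzero Hodge class lies in the vanishing part. Hence every class of $NS_r(\tX;\bQ)$ lifts to $H^{2r}(\tilde{Y};\bQ)$. Combining with the rational blow-up decomposition of $H^{2r}(\tilde{Y};\bQ)$, which splits as $H^{2r}(Y;\bQ)$ plus contributions $\zeta_i$ coming from the components $Z_i$ of dimension $r$, and pushing forward by $\pi_*$, every class of $NS_r(X;\bQ)$ decomposes as the restriction of a class from $Y$ plus a rational combination of the $[Z_i]$. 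This gives (c).

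Part (a) then follows from the $\bQ$-linear independence of the $[Z_i]$ given by (c), provided torsion in the generated subgroup is excluded --- for instance by pairing with ample classes on $\tX$ and invoking the nondegeneracy of the intersection pairing on the blow-up. For (b), the rational splitting of (c) must be promoted to an integral direct sum: the triviality of $H^m(X;\bZ)_Z \cap H^m(Y;\bZ)$ follows from the integral blow-up decomposition for $\tilde{Y}$ combined with the torsion-free Lefschetz cokernel for $\tX \subset \tilde{Y}$, and the integral generation statement follows by lifting any class of $NS_r(X;\bZ)$ to $\tX$, expressing its lift integrally via the blow-up decomposition on $\tilde{Y}$, and pushing back down.

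The main obstacle is precisely this integral refinement in (b). Monodromy arguments are intrinsically rational, so promoting them to integral statements requires careful simultaneous tracking of the four lattices $H^m(Y;\bZ)$, $H^m(X;\bZ)$, $H^m(\tilde{Y};\bZ)$, $H^m(\tX;\bZ)$ together with the pullback and Gysin maps between them; the torsion-freeness of the Lefschetz cokernels for both $X \subset Y$ and $\tX \subset \tilde{Y}$ will be essential throughout, as will the compatibility of the integral blow-up formula with the restriction to $\tX$.
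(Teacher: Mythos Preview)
Your approach is genuinely different from the paper's, and it contains a real gap.

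\textbf{How the paper proceeds.} The paper never blows up $Z$. Instead it introduces general divisors $G_1,\dots,G_r\in|H^0(Y,\ic_{Z,Y}(\delta))|$ and works with the complete intersection $\Delta=X\cap G_1\cap\dots\cap G_r$ and with $W=X\cap G_1$, which has only \emph{isolated} singularities. The blow-up $P=Bl_W(Y)$ then sits inside the $\Ps^1$-bundle $E=\Ps(\oc_Y(k)\oplus\oc_Y(d))$ and maps to a cone $Q$ with isolated singularities; the Lefschetz theorem with singularities (Goresky--MacPherson, Hamm) applied to $Q$ identifies the integral invariant subspace $I_W(\bZ)$ with the image of $H_{m+2}(Q;\bZ)$ and shows the cokernel is torsion-free. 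An inductive descent from $W$ to $\Delta$ (Theorem~\ref{thmb}) then gives $I_\Delta(\bZ)=H^m(Y;\bZ)+H^m(X;\bZ)_\Delta$, and the relation $\Delta=Z_1+\dots+Z_\rho+C=\delta^r H_X^r$ absorbs the residual component $C$ into $H^m(Y;\bZ)$. Part (c) is proved by a separate induction on $\dim Y$.

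\textbf{The gap in your argument.} Your key claim is that after principalisation $\pi:\tilde Y\to Y$ of the ideal of $Z$, the group $H^{2r}(\tilde Y;\bQ)$ decomposes as $H^{2r}(Y;\bQ)$ plus one class $\zeta_i$ per top-dimensional component $Z_i$. This is false in general: $Z$ is an arbitrary closed subscheme (possibly non-reduced, with embedded components, and highly singular along its $r$-dimensional components), so principalisation may require many blow-ups at smooth centres, each contributing several new cohomology classes in degree $2r$ coming from lower cohomology of the centres. These extra classes do push forward to cycles supported on $Z$, hence to combinations of the $[Z_i]$ in $H_{2r}(X)$, so the \emph{rational} statement (c) is perhaps salvageable along your lines; but you would still need to check that the vanishing-cohomology hypothesis, stated for $X\subset Y$, transfers to $\tilde X\subset\tilde Y$ (the two vanishing subspaces are not a priori the same, since $\tilde X\to X$ is a nontrivial birational modification over $Z$). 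For the integral statement (b) the situation is worse: with an uncontrolled tower of blow-ups you have no handle on the integral lattice $H^{2r}(\tilde Y;\bZ)$ or on whether the extra exceptional classes, restricted to $\tilde X$ and pushed down, produce exactly the integral span of the $[Z_i]$ rather than a sublattice. The paper's device of replacing $Z$ by the much better-behaved $W$ (isolated singularities, locally complete intersection) is precisely what makes the integral torsion-freeness arguments go through.
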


\noindent In the case $Z=\emptyset$, i.e. when $X$ is simply a
general hypersurface section of $Y$, this result easily follows
combining the quoted paper \cite{Moishezon} with Lefschetz
Hyperplane Theorem. It seems unknown whether the inclusion
$NS_{r+1}(Y;\mathbb Z)\subseteq NS_r(X;\mathbb Z)\cap
H^m(Y;\mathbb Z)$ is an equality, but there is some evidence
supporting this (\cite{Totaro}, Remark 1, p. 490).

The line of the proof of our Theorem is the following. Fix  smooth
divisors $G\in |H^0(Y,\mathcal O_Y(\delta))|$ and $X\in
|H^0(Y,\mathcal O_Y(d))|$ containing $Z$, and put $W:=G\cap X$ (by
(\cite{Vogel}, p. 133, Proposition 4.2.6. and proof) one knows
that $W$ has only isolated singularities). By an inductive method
(Theorem \ref{thmb}), in part already appearing in \cite{DGF} and
\cite{IJM2}, one reduces the proof to identify the subgroup
$I_W(\mathbb Z)\subseteq H^m(X;\mathbb Z)$ of the invariant
cocycles with respect to the monodromy representation on
$H^m(X_t;\mathbb Z)$ for the family of smooth divisors $X_t$ in
$|H^0(Y,\mathcal O_Y(d))|$ containing $W$. In the case of rational
coefficients we already know  that $I_W(\mathbb
Q)=H^m(Y;\mathbb{Q})+H^{m}(X_t;\mathbb{Q})_W$ \cite{DGF}, where
$H^{m}(X_t;\mathbb{Q})_W$ denotes the image of the push-forward
map $H_{m}(W;\mathbb{Q})\to H_{m}(X_t;\mathbb{Q})\cong
H^{m}(X_t;\mathbb{Q})$. However, unlike the case in which $Y$ is a
complete intersection (\cite{IJM2}, Theorem 2.3), in our general
setting classical Lefschetz Theory is not enough to deduce that
$H^{m}(X_t;\mathbb{Z})\slash
\left[H^m(Y;\mathbb{Z})+H^{m}(X_t;\mathbb{Z})_W\right]$ is torsion
free, hence that $I_W(\mathbb
Z)=H^m(Y;\mathbb{Z})+H^{m}(X_t;\mathbb{Z})_W$. We are able to
overcome this difficulty combining a more refined Lefschetz Theory
(see \cite{GMP3},  \cite{Hamm}, and  Theorem \ref{finv}, Lemma
\ref{semid}, and Lemma \ref{gmp} below), with a topological
description of the blowing-up $P:=Bl_W(Y)$ of $Y$ along $W$. This
decription relies on a sort of a decomposition theorem for the
integral homology of $P$ (Corollary \ref{ndecscoppdue}) for which,
even if many similar results already appear in the literarture
(\cite{Fulton}, \cite{Voisin}, \cite{Hartshorne2}, \cite{CG}), we
did not succeed in finding an appropriate reference.

\bigskip
\section{The group of invariants $I_W(\mathbb
Z)$.}

Let $Y\subseteq \Ps^N$  be a smooth complex projective variety in
$\Ps^N$, of odd dimension $m+1=2r+1\geq 3$. Fix integers $1\leq
k<d$, and smooth divisors $G\in |H^0(Y,\mathcal O_Y(k))|$ and
$X\in |H^0(Y,\mathcal O_Y(d))|$. Put
$$W:=G\cap X.$$
By (\cite{Vogel}, p. 133, Proposition 4.2.6. and proof) one knows
that $W$ has only isolated singularities. Let $I_W(\mathbb
Z)\subseteq H^m(X_t;\mathbb Z)$ be the group   of the invariant
cocycles with respect to the monodromy representation on
$H^m(X_t;\mathbb Z)$ for the family of smooth divisors $X_t$ in
$|H^0(Y,\mathcal O_Y(d))|$ containing $W$. The aim of this section
is to identify this group $I_W(\mathbb Z)$. In fact we are going
to prove that, at least under  suitable assumptions (unnecessary
in the case of rational coefficients), one has
\begin{equation}\label{ninv}
I_W(\mathbb Z)=H^m(Y;\mathbb Z)+H^m(X_t;\mathbb Z)_W
\end{equation}
(see Proposition \ref{semi} below), where we denote by
$H^m(X_t;\mathbb Z)_W$ the image of the push-forward map
$H_m(W;\mathbb Z)\to H_m(X_t;\mathbb Z)$ composed with Poincar\'e
duality $H_m(X_t;\mathbb Z)$ $\cong H^m(X_t;\mathbb Z)$, and  we
see $H^m(Y;\mathbb Z)$ contained in $H^m(X_t;\mathbb Z)$ via
pull-back thanks to Lefschetz Hyperplane  Theorem (one may give
similar definition with $\mathbb Q$ instead of $\mathbb Z$).
Equality (\ref{ninv}) relies on the study of the rational map
$Y\dasharrow \Ps$ $:=\Ps(H^0(Y,\ic_{W,Y}(d))^*)$ defined by the
linear system $|H^0(Y,\ic_{W,Y}(d))|$. For a geometric description
of this map, we refer to \cite{DGF1}, p. 755, and \cite{DGF}, p.
525. Here we simply recall the main properties. Next we will turn
to the topology of the blowing-up $P:=Bl_W(Y)$ of $Y$ along $W$.

\bigskip {\it{A geometric description  of the rational map
$Y\dasharrow \Ps$}}.
\smallskip

(a) Let $P$ be the blowing-up of $Y$ along $W$. For the strict
transforms $\widetilde G$ and ${\widetilde X}_t$ of $G$ and $X_t$
in $P$, one has $\widetilde G\cong G$, and ${\widetilde X}_t\cong
X_t$ when $G$ is not contained in $X_t$. By \cite{Fulton}, 4.4,
the rational map $Y\dasharrow \Ps :=\Ps(H^0(Y,\ic_{W,Y}(d))^*)$
defines a morphism $P\to \Ps$. Denote by $Q$ the image of this
morphism, i.e.:
$$
Q:=\Im(P\to \Ps)
$$
(compare with   \cite{Franchetta},  \cite{GH}).

\smallskip (b)
Set $E :=\Ps(\oc_{Y}(k)\oplus\oc_{Y}(d))$. The surjections
$\oc_{Y}(k)\oplus \oc_{Y}(d) \to \oc_{Y}(d)$ and $\oc_{Y}(k)\oplus
\oc_{Y}(d) \to \oc_{Y}(k)$ give rise to divisors $\Theta \cong
Y\subseteq E$ and $\Gamma\cong Y\subseteq E$, with $\Theta\cap
\Gamma =\emptyset$. The line bundle $\oc_{E}(\Theta )$ is base
point free and the corresponding morphism $E\to
\Ps(H^0(E,\oc_{E}(\Theta))^*)\cong \Ps$ sends $E$ to the cone $CY$
over $Y$ embedded via $|H^0(Y,\oc_{Y}(d-k))|$. This map contracts
$\Gamma $ to the vertex $v_\infty$ of the cone, and $\Theta $ to a
general hyperplane section  of $CY$. There is a natural closed
immersion $P\subset E$, and the trace of $|\Theta |$ on $P$,
giving the linear series spanned by the strict transforms
${\widetilde X}_t$, induces the map $P\to Q$. Hence we have a
natural commutative diagram:
$$
\begin{array}{ccccc}
 P&\hookrightarrow  & E  \\
\downarrow &\searrow & &\searrow  \\
Y & \dasharrow &Q&\hookrightarrow & CY\subseteq\Ps.\\
\end{array}
$$

\smallskip (c)
Moreover one has: $\Gamma \cap P = \widetilde G$; the map $P\to Q$
contracts $\widetilde G $ to $v_{\infty}\in Q$; $P \backslash
\widetilde G \cong Q \backslash \{v_{\infty}\}$; the hyperplane
sections $Q_t$ of $Q$, not containing the vertex, are isomorphic,
via $P\to Q $, to the corresponding divisors
$X_t\in|H^0(Y,\ic_{W,Y}(d))|$; the monodromy representation on
$H^m(X_t;\mathbb Z)$ for the family of smooth divisors $X_t$ in
$|H^0(Y,\mathcal O_Y(d))|$ containing $W$, identifies with the
monodromy representation on $H^m(Q_t;\mathbb Z)$ for the family of
smooth hyperplane sections $Q_t$ of $Q$; so as $W$, also $P$ and
$Q$ have only isolated singularities.

\bigskip
The following Theorem \ref{finv}  applies to $Q$ (with $Q=R$,
$m=n$, and $I_W(\mathbb Z)=I$). Recall that the inclusion
$X_t\cong Q_t \subset Q$ induces a Gysin map $H_{m+2}(Q;\mathbb
Z)\to H_m(X_t;\mathbb Z)$ (see \cite{Baum}, or \cite{Fulton}, p.
382, Example 19.2.1).

\begin{theorem}\label{finv} Let $R\subseteq\Ps^N$ be an irreducible, reduced, non-degenerate
projective variety of dimension $n+1\geq 2$, with isolated
singularities, and let $R_t$ be a general hyperplane section of
$R$. Denote by $i^{\star}_{k}:H_{k+2}(R;\mathbb Z)\to
H^{2n-k}(R_t;\mathbb Z)$ the map obtained composing the Gysin map
$H_{k+2}(R;\mathbb Z)\to H_{k}(R_t;\mathbb Z)$ with Poincar\'e
duality $H_{k}(R_t;\mathbb Z)\cong H^{2n-k}(R_t;\mathbb Z)$. Then
the following properties hold true.

\medskip
(a) For any  integer $n< k\leq 2n$ the map $i^{\star}_{k}$ is an
isomorphism, the map
 $i^{\star}_{n}$ is injective  with torsion-free cokernel, and $H_{n+2}(R;\mathbb
Z)\cong I$ via  $i^{\star}_{n}$, where  $ I\subseteq H^{n}(R_t;
\mathbb{Z})$ denotes the invariant subgroup given by the monodromy
action on the cohomology of $R_t$.

\medskip
(b) For any  even integer $n< k=2i\leq 2n$ the map
$i^{\star}_{k}\otimes_{ \mathbb{Z}} \mathbb{Q}$ induces an
isomorphism $NS_{i+1}(R;\mathbb Q)\cong NS_{i}(R_t;\mathbb Q)$.

\medskip
(c) If $k=2i=n$ and the orthogonal complement $V$ of
$I\otimes_{\mathbb Z}\mathbb Q$ in $H^{n}(R_t;\mathbb Q)$ is not
of pure Hodge type $(n/2,n/2)$, then $NS_{i}(R_t;\mathbb
Z)\subseteq I$, and the map $i^{\star}_{n}\otimes_{ \mathbb{Z}}
\mathbb{Q}$ induces an isomorphism $NS_{i+1}(R;\mathbb Q)\cong
NS_{i}(R_t;\mathbb Q)$.
\end{theorem}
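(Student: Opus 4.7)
The plan is to reduce everything to the topology of the affine complement $R\setminus R_t$, exploiting the fact that it is an affine variety of complex dimension $n+1$ and invoking Andreotti-Frankel-type vanishing. Since $R$ has only finitely many singular points, a general hyperplane section $R_t$ avoids them, so $R_t$ is a smooth Cartier divisor sitting inside the smooth locus of $R$ and admitting a tubular neighborhood modeled on its normal line bundle. Excision together with the Thom isomorphism then gives $H_*(R,R\setminus R_t)\cong H_{*-2}(R_t)$, so the homological long exact sequence of the pair $(R,R\setminus R_t)$ takes the form
\begin{equation*}
\cdots \to H_{k+2}(R\setminus R_t;\mathbb{Z}) \to H_{k+2}(R;\mathbb{Z}) \xrightarrow{\text{Gysin}} H_k(R_t;\mathbb{Z}) \to H_{k+1}(R\setminus R_t;\mathbb{Z}) \to \cdots
\end{equation*}
with the Gysin map of the statement as the middle arrow.

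By Andreotti-Frankel, in the extension to (possibly singular) Stein spaces due to Karchyauskas and Hamm, $R\setminus R_t$ has the homotopy type of a CW complex of real dimension at most $n+1$. Hence $H_j(R\setminus R_t;\mathbb{Z})=0$ for $j>n+1$, and $H_{n+1}(R\setminus R_t;\mathbb{Z})$ is free abelian as a subgroup of the top cellular chain group. Feeding this into the Gysin sequence immediately yields the structural content of (a): for $k>n$, both $H_{k+1}$ and $H_{k+2}$ of the complement vanish and Gysin is an isomorphism; for $k=n$, only $H_{n+2}$ vanishes, giving injectivity, and the cokernel embeds in the torsion-free group $H_{n+1}(R\setminus R_t;\mathbb{Z})$. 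Composing with Poincaré duality on the smooth variety $R_t$ translates these into the claimed properties of $i^{\star}_k$. It remains to identify $\mathrm{image}(i^{\star}_n)=I$: one inclusion is immediate because the Gysin construction extends across the family of smooth hyperplane sections of $R$ and thus produces monodromy-invariant classes, while the reverse inclusion is essentially Deligne's global invariant cycle theorem, whose integral refinement in the singular setting is where the more refined Lefschetz theory of Goresky-MacPherson and Hamm cited in the introduction must enter (presumably packaged in Lemma \ref{semid} or Lemma \ref{gmp}).

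Parts (b) and (c) are then formal consequences. For (b), with $n<k=2i\leq 2n$, the isomorphism of (a) is compatible with Hodge structures up to the Tate twist coming from Gysin, and in the above-the-middle range $2(n-i)<n$ one may invoke Moishezon's argument to lift rational Hodge classes from $R_t$ to algebraic classes on $R$. For (c), the hypothesis that the orthogonal complement $V$ carries no $(n/2,n/2)$ Hodge component forces every rational algebraic class on $R_t$ to lie in $I\otimes\mathbb{Q}$, and the torsion-freeness from (a) upgrades this to $NS_i(R_t;\mathbb{Z})\subseteq I$; the rational isomorphism $NS_{i+1}(R;\mathbb{Q})\cong NS_i(R_t;\mathbb{Q})$ then follows from $H_{n+2}(R;\mathbb{Q})\cong I\otimes\mathbb{Q}$. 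The hardest step in the whole proof is the integral identification $\mathrm{image}(i^{\star}_n)=I$: the rational version is classical, but its integral refinement in the present singular setting is precisely the point where the ``more refined Lefschetz theory'' announced in the introduction plays its essential role.
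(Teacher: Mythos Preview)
Your affine-variety approach to the structural half of (a) is correct and is a genuine alternative to the paper's proof: instead of passing to the pair $(R\setminus\mathrm{Sing}(R),R_t)$ and invoking the Goresky--MacPherson/Hamm Lefschetz theorem for that pair, you work with the pair $(R,R\setminus R_t)$ and use the singular Andreotti--Frankel theorem on the affine complement. Both routes yield the isomorphism for $k>n$ and the injectivity with torsion-free cokernel for $k=n$; yours is arguably more direct.

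However, you have inverted the logic of the identification $\mathrm{image}(i^{\star}_n)=I$. No ``integral refinement'' is needed: once you know the cokernel of $i^{\star}_n$ is torsion-free (which you have already proved), the \emph{rational} equality $\mathrm{image}(i^{\star}_n\otimes\mathbb{Q})=I\otimes\mathbb{Q}$ immediately gives the integral one. The paper establishes that rational equality by desingularising $R$, blowing up the base locus of a general pencil, and applying Deligne's invariant subspace theorem together with the decomposition theorem to compare the images of $H_{n+2}(R;\mathbb{Q})$, $H_{n+2}(\widetilde R;\mathbb{Q})$ and $H_{n+2}(\widetilde R_L;\mathbb{Q})$ in $H^n(R_t;\mathbb{Q})$. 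Lemmas~\ref{semid} and~\ref{gmp} play no role here; they are used later for entirely different purposes.

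There is a genuine gap in your treatment of (c). You restate the hypothesis as ``$V$ carries no $(n/2,n/2)$ Hodge component'', but the actual hypothesis is that $V$ is \emph{not of pure} Hodge type $(n/2,n/2)$, i.e.\ $V\neq V^{n/2,n/2}$; it may very well contain nonzero $(n/2,n/2)$-classes. The correct argument uses that $NS_{n/2}(R_t;\mathbb{Q})$ is globally monodromy-invariant together with the \emph{irreducibility} of $V$ as a monodromy representation (\cite{DGF}, Theorem~3.1): if $NS_{n/2}(R_t;\mathbb{Q})$ were not contained in $I\otimes\mathbb{Q}$ then its projection to $V$ would be a nonzero invariant subspace, hence all of $V$, forcing $V$ to be pure $(n/2,n/2)$.

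Finally, neither (b) nor the $NS$-isomorphism in (c) is ``formal''. Knowing $H_{k+2}(R;\mathbb{Q})\cong H^{2n-k}(R_t;\mathbb{Q})$ does not by itself tell you that an algebraic class on $R_t$ lifts to an \emph{algebraic} class on $R$. The paper supplies a concrete Hilbert-scheme/pencil argument in the spirit of Moishezon: spread a representing subvariety $S_1\subset R_t$ in a family over a curve $\mathcal{C}$ dominating a general pencil $L$, sweep out a cycle $T\subset R_L$, and use the invariance from (a) to show that the intersection of $T$ with $R_t$ is a multiple of $[S_1]$; an induction then brings the class down from $R_L$ to $R$. Without this step (or an equivalent), the surjectivity of $NS_{i+1}(R;\mathbb{Q})\to NS_i(R_t;\mathbb{Q})$ is unproven.
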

\begin{proof}
(a)  From Borel-Moore homology exact sequence:
$0=H_{k+2}^{BM}({\text{Sing}(R)};\mathbb Z)\to
H_{k+2}^{BM}(R;\mathbb Z) \to H_{k+2}^{BM}(R\backslash
{\text{Sing}(R)};\mathbb Z)\to
H_{k+1}^{BM}({\text{Sing}(R)};\mathbb Z)=0$ (\cite{Fulton}, p.
371, and \cite{Fulton2}, p. 219, Lemma 3) we see that
$H_{k+2}(R;\mathbb Z) \cong H_{k+2}^{BM}(R\backslash
{\text{Sing}(R)};\mathbb Z)$ (recall that in the projective case
Borel-Moore and singular homology agree (\cite{Fulton2}, p. 217)).
On the other hand by (\cite{Fulton2}, p. 217, (26)) we have
$H_{k+2}^{BM}(R\backslash {\text{Sing}(R)};\mathbb Z) \cong
H^{2n-k}(R\backslash {\text{Sing}(R)};\mathbb Z)$, and so
$H_{k+2}(R;\mathbb Z) \cong H^{2n-k}(R\backslash
{\text{Sing}(R)};\mathbb Z)$. Therefore  $i^{\star}_{k}$
identifies with the pull-back $H^{2n-k}(R\backslash
{\text{Sing}(R)};\mathbb Z)\to H^{2n-k}(R_t;\mathbb Z)$. Now, by
the Lefschetz Theorem with Singularities (see \cite{GMP3}, p. 199
or also \cite{Hamm}, p. 552) we know that the pair $(R\backslash
{\text{Sing}(R)},R_t)$ is $n$-connected (\cite{Spanier}, p. 373).
From the relative Hurewicz Isomorphism Theorem and the Universal
Coefficient Theorem (\cite{Spanier}, p. 397 and p. 243) it follows
that $H^{2n-k}(R\backslash {\text{Sing}(R)},R_t;\mathbb Z)=0$ for
$2n-k\leq n$, and that $H^{n+1}(R\backslash
{\text{Sing}(R)},R_t;\mathbb Z)$ is torsion-free. This implies
that $H_{k+2}(R;\mathbb Z) \cong H^{2n-k}(R\backslash
{\text{Sing}(R)};\mathbb Z)\to H^{2n-k}(R_t;\mathbb Z)$ is an
isomorphism for $2n-k<n$, and injective with torsion-free cokernel
when $k=n$.

It remains to prove that $H_{n+2}(R;\mathbb Z)=I$. Since
$i^{\star}_n$ is injective with torsion free cokernel, it will
suffice to prove that the space $ I\otimes_{\mathbb Z}\mathbb
Q\subseteq H^n(R_t;\mathbb Q)$ of invariants with rational
coefficients is equal to the image of the injective map
$i^{\star}_n\otimes_{\mathbb Z}\mathbb Q:H_{n+2}(R;\mathbb Q)\to
H^n(R_t;\mathbb Q)$. This is a consequence of Deligne Invariant
Subspace Theorem (\cite{PS}, p. 165) in view of the following
reasoning.

Let $\widetilde R\to R$ be a desingularization of $R$, and let $L$
be a general pencil of hyperplane sections of $R$. Denote by
$\widetilde R_L$ the blowing-up of $\widetilde R$ along the base
locus $B_L$ of $L$, and consider the induced maps $\widetilde
R_L\to\widetilde R\to R$. By Decomposition Theorem (\cite{Dimca2},
Proposition 5.4.4 p. 157, and Corollary 5.4.11 p. 161)  we know
that $H^{n+2}(R;\mathbb Q)$   is naturally embedded in
$H^{n+2}(\widetilde R;\mathbb Q)$ via pull-back. Therefore the
push-forward $H_{n+2}(\widetilde R;\mathbb Q)\to H_{n+2}(R;\mathbb
Q)$ is surjective. We deduce that the image of
$H_{n+2}(R;\mathbb{Q})$ in $H^{n}(R_t;\mathbb{Q})$ is equal to the
image of $H_{n+2}(\widetilde R;\mathbb{Q})$ via Gysin map composed
with  Poincar\'e duality $H_{n+2}(\widetilde R;\mathbb{Q})\to
H_n(R_t;\mathbb Q)\cong H^n(R_t;\mathbb Q)$. On the other hand by
the decomposition $ H_{n+2}(\widetilde
R_L;\mathbb{Q})=H_{n+2}(\widetilde R;\mathbb{Q})\oplus
H_{n}(B_L;\mathbb{Q}) $ (\cite{Voisin}, p. 170, Th\'eor\`eme 7.31)
we see that the image of $H_{n+2}(\widetilde R_L;\mathbb{Q})$ in
$H^{n}(R_t;\mathbb{Q})$ is equal to the image of
$H_{n+2}(\widetilde R;\mathbb{Q})$ plus the image of the
push-forward $H_{n}(B_L;\mathbb{Q})\to H_{n}(R_t;\mathbb{Q})\cong
H^{n}(R_t;\mathbb{Q})$. But this last image is contained in the
image of $H_{n+2}(R;\mathbb{Q})$: in fact by Poincar\'e duality
and Lefschetz Hyperplane Theorem we have
$H_{n}(B_L;\mathbb{Q})\cong H^{n-2}(B_L;\mathbb{Q})\cong
H^{n-2}(R_t;\mathbb{Q})\cong H_{n+2}(R_t;\mathbb{Q})$, so
$H_{n}(B_L;\mathbb{Q})$ arrives in $H^{n}(R_t;\mathbb{Q})$ passing
through the push-forward $H_{n+2}(R_t;\mathbb{Q})\to
H_{n+2}(R;\mathbb{Q})$. It follows that the image of
$H_{n+2}(R;\mathbb{Q})$ in $H^{n}(R_t;\mathbb{Q})$ is equal to the
image of $H_{n+2}(\widetilde R_L;\mathbb{Q})$ in
$H^{n}(R_t;\mathbb{Q})$. Taking into account the Poincar\'e
duality $H_{n+2}(\widetilde R_L;\mathbb{Q})\cong H^{n}(\widetilde
R_L;\mathbb{Q})$, this image is the invariant space
$I\otimes_{\mathbb Z}\mathbb Q$ by the quoted Deligne Theorem.

\smallskip (b) In view of previous property, we only have to prove that
the map $NS_{i+1}(R;\mathbb Q)$ $\to NS_{i}(R_t;\mathbb Q)$ is
surjective for any $n< k=2i\leq 2n$. We argue by induction on $n$
and by decreasing induction on $k$, the cases $n=1$ and $k=2n$
being trivial. Hence assume $n>1$ and $n< k=2i< 2n$. As before,
let  $L:=\{R_t\}_{t\in {\mathbb{P}^1}}$ be a general pencil of
hyperplane sections of $R$, and denote by ${R}_L$ the blowing-up
of $R$ at the base locus $B_L$. By previous property (a) we know
that all cycles in  $H_{2i}(R_t;\mathbb{Z})$ are invariant. Based
on this, the following argument proves that
$NS_{i+1}({R}_L;\mathbb Q)$ maps onto $NS_{i}(R_t;\mathbb Q)$
(compare with \cite{Moishezon}, p. 242, Lemma 2 and proof).

In fact, fix any algebraic class $\xi\in  NS_{i}(R_t;\mathbb Q)$,
which we may assume represented by some projective algebraic
subvariety $S_1\subseteq R_t$ of dimension $i$, and consider the
Hilbert scheme $\mathcal S$, with reduced structure, parametrizing
pairs $(S, R_{t'})$, with  $ R_{t'}$ a hyperplane section  of $R$,
and $S\subseteq  R_{t'}$ a projective subvariety of dimension $i$.
Let $\mathcal C \subseteq \mathcal S$ be an irreducible projective
curve passing through the point $(S_1, R_t)$. Since $R_t$ is
Noether-Lefschetz general, we may assume $\mathcal C$ dominating
$L$ and such that $t$ is a regular value of the natural branched
covering map $\pi:\mathcal C \to L$. The fibres of $\pi$ sweep out
a projective subvariety $T\subseteq R_L$ of dimension $i + 1$,
whose intersection with $R_t$ is the union of all the subvarieties
$S_h$, $h = 1,\dots,p$, corresponding to the fibre of $\pi$ over
the point $t\in L$ ($p$ = degree of $\pi$). Since the monodromy of
$\pi$ is transitive, by (a) we deduce that all the $S_h$ are
homologous in $R_t$, and therefore $\xi$ comes from
$\frac{1}{p}\cdot T$ through the natural map
$NS_{i+1}({R}_L;\mathbb Q)\to NS_{i}(R_t;\mathbb Q)$.  This proves
that $NS_{i+1}({R}_L;\mathbb Q)$ maps onto $NS_{i}(R_t;\mathbb
Q)$.

On the other hand $NS_{i+1}(R;\mathbb Q)\oplus NS_{i+1}(B_L\times \Ps^1;\mathbb Q)$ maps onto $NS_{i+1}({R}_L;\mathbb Q)$  (\cite{Fulton}, Proposition 6.7, (e), p. 115).  Hence $NS_{i+1}(R;\mathbb
Q)\oplus NS_{i}(B_L;\mathbb Q)$ maps onto $NS_{i}({R}_t;\mathbb
Q)$. Now by induction $NS_{i+1}(R_t;\mathbb Q)$ maps onto
$NS_{i}(B_L;\mathbb Q)$, and $NS_{i+2}(R;\mathbb Q)$ maps onto
$NS_{i+1}(R_t;\mathbb Q)$. This means that the cycles of
$NS_{i}(B_L;\mathbb Q)$ arrive in $NS_{i}(R_t;\mathbb Q)$ as
cycles coming from $NS_{i+2}(R;\mathbb Q)$, hence as cycles coming
from $NS_{i+1}(R;\mathbb Q)$.

\smallskip (c) Now assume $k=2i=n$. Since $NS_{n/2}(R_t;\mathbb Q)$
is globally invariant (\cite{Lew}, p. 207, Theorem 13.18  and
proof), $V$ is not of pure Hodge type $(n/2,n/2)$, and $V$  is
irreducible (\cite{DGF}, Theorem 3.1), by a standard argument
(compare e.g. with \cite{IJM2}, proof of Theorem 1.1) it follows
that $NS_{n/2}(R_t;\mathbb Q)\subseteq I\otimes_{\mathbb Z}\mathbb
Q$. Then previous argument we used in proving (b) works well again
to prove that $NS_{n/2+1}(R;\mathbb Q)\cong NS_{n/2}(R_t;\mathbb
Q)$ (in this case $NS_{n/2+1}(R_t;\mathbb Q)$ maps onto
$NS_{n/2}(B_L;\mathbb Q)$, and $NS_{n/2+2}(R;\mathbb Q)$ maps onto
$NS_{n/2+1}(R_t;\mathbb Q)$ by (b)). Finally we notice that the
inclusion $NS_{n/2}(R_t;\mathbb Q)\subseteq I\otimes_{\mathbb
Z}\mathbb Q$ implies that $NS_{n/2}(R_t;\mathbb Z)\subseteq I$ for
$H^n(R_t;\mathbb Z)\slash I$ is torsion free by (a).
\end{proof}

\bigskip
\begin{remark} We will not need this fact but a similar
argument as before shows that {\it all cycles in
$H_{k}(R_t;\mathbb{Z})$ are invariant also for $0\leq k<n$, and if
$0\leq k=2i\leq n$ is even and $h^{n,0}(R_t)>h^{n,0}(\widetilde
R)$, then $i^{\star}_{k}\otimes_{ \mathbb{Z}} \mathbb{Q}$ induces
a surjection of $NS_{i+1}(R;\mathbb Q)$ onto $NS_{i}(R_t;\mathbb
Q)$} (compare with \cite{Moishezon}, p. 245, Theorem 5.4). Indeed
for $k<n$ $H_{k}(B_L;\mathbb{Z})$ maps onto
$H_{k}(R_t;\mathbb{Z})$ by Lefschetz Hyperplane Theorem. Moreover,
since $h^{n,0}(R_t)>h^{n,0}(\widetilde R)$ then
$h^{n-1,0}(B_L)>h^{n-1,0}(R_t)$, and therefore one may use
induction as in (b).
\end{remark}

\bigskip {\it{A topological description  of the blowing-up $P=Bl_W(Y)$}}.
\smallskip

We are going to prove there is a natural isomorphism
\begin{equation}\label{niso}
H_k(P;\mathbb Z)\cong H_{k-2}(W;\mathbb Z)\oplus H_k(Y;\mathbb Z)
\end{equation}
for any $k$ (see Corollary \ref{ndecscoppdue} below), from which
we deduce that  $H^m(Y;\mathbb Z)+H^m(X_t;\mathbb Z)_W$ is equal
to the image of the map $H_{m+2}(P;\mathbb{Z})\to
H^{m}(X_t;\mathbb{Z})$ obtained composing the Gysin map
$H_{m+2}(P;\mathbb{Z})\to H_{m}(X_t;\mathbb{Z})$, induced by the
natural inclusion $X_t\subset P$, with Poincar\'e duality
(Corollary \ref{nimp}). This is an intermediate step to prove
(\ref{ninv}). Recall that $P$ can have isolated singularities, so,
to prove (\ref{niso}), we cannot apply (\cite{Voisin}, p. 170,
Th\'eor\`eme 7.31). For similar results compare also with
(\cite{Fulton}, Proposition 6.7, (e), p. 115),
(\cite{Hartshorne2}, Proposition 4.5, p. 43), and (\cite{CG},
Corollary 3, p. 371). Next, comparing $P$ with $Q$, and using
Theorem \ref{finv}, we will prove (\ref{ninv}) (see Proposition
\ref{semi} below).

In order to prove (\ref{niso}), we need some preliminaries.

\begin{notations}\label{ng} (i)
Consider the natural commutative diagram:
$$
\begin{array}{ccccc}
 \widetilde W&\stackrel{j}\hookrightarrow  & P  \\
\stackrel {g}{}\downarrow &  &\stackrel {f}{}\downarrow  \\
W&\stackrel{i}\hookrightarrow  & Y\\
\end{array}
$$
where $\widetilde W=\Ps(\oc_{W}(-k)\oplus\oc_{W}(-d))$ denotes the
exceptional divisor on $P$. Put $U=P\backslash\widetilde W\cong
Y\backslash W$, and consider, for any integer $k$, the following
natural commutative diagram in Borel-Moore Homology Theory
(\cite{Fulton2}, p. 219, Exercise 5):
\begin{equation}\label{ndzero}
\begin{array}{ccccccc}
H_{k+1}^{BM}(U;\mathbb{Z})&\stackrel {\partial}{\to}&H_{k}(\widetilde W;\mathbb{Z})&\stackrel{j_*}{\to}
&H_{k}(P;\mathbb{Z})&{\to}&H_{k}^{BM}(U;\mathbb{Z})\\
\Vert & &\stackrel {g_*}{}\downarrow & & \stackrel {f_*}{}\downarrow& &\Vert \\
H_{k+1}^{BM}(U;\mathbb{Z})&\stackrel {\partial}{\to}&H_{k}(W;\mathbb{Z})&\stackrel{i_*}{\to}&H_{k}(Y;\mathbb{Z})
&{\to}&H_{k}^{BM}(U;\mathbb{Z})\\
\end{array}
\end{equation}
(as before, in the projective case, we identify Borel-Moore and
singular homology groups).

\smallskip
(ii) Besides the push-forward maps $f_*$ and $g_*$ we may consider
the Gysin maps $f^{\star}:H_{k}(Y;\mathbb{Z})\to
H_{k}(P;\mathbb{Z})$ and $ g^{\star}:H_{k-2}(W;\mathbb{Z})\to
H_{k}(\widetilde W;\mathbb{Z}) $ (see \cite{Baum}, or
\cite{Fulton}, p. 382, Example 19.2.1), and the map
$g^{!}:H_{k}(W;\mathbb{Z})\to H_{k}(\widetilde W;\mathbb{Z})$
defined composing the Gysin map $g^{\star}:H_{k}(W;\mathbb{Z})\to
H_{k+2}(\widetilde W;\mathbb{Z})$ with the cap-product
$\frown\,[P]_{|{\widetilde W}}:H_{k+2}(\widetilde W;\mathbb{Z})
{\to} H_{k}(\widetilde W;\mathbb{Z})$. Here $[P]_{|{\widetilde
W}}$ denotes the pull-back in $H^{2}(\widetilde W;\mathbb{Z})$ of
the orientation class $[P]\in H^{2}(E,E\backslash P
;\mathbb{Z})\cong H_{2m+2}(P;\mathbb{Z})$ defined by $P\subset E$
(\cite{Fulton}, p. 371 and p. 378).

\smallskip
(iii) More generally, consider a line bundle $\oc_{E}(D)$, and
denote by $[D]\in H^{2}(E;\mathbb{Z})$ its cohomology class. We
may define a map $ g^{!}_{[D]}:H_{k}(W;\mathbb{Z})\to
H_{k}(\widetilde W;\mathbb{Z}) $ in a similar way as  $g^{!}$,
using $[D]_{|{\widetilde W}}$ instead of $[P]_{|{\widetilde W}}$.
Observe that $g^{!}=g^{!}_{[P]}$ because
$\oc_{E}(P)=\oc_{E}(\Theta +k\Lambda)$ ($\Lambda :=$ pull-back of
the hyperplane section of $Y\subseteq\Ps^N$ through $E\to Y$).

\smallskip
(iv) We denote by $ \gamma:H_{k}(P;\mathbb{Z})\to
H_{k-2}(W;\mathbb{Z}) $ the map obtained composing the Gysin map
$H_{k}(P;\mathbb{Z})\to H_{k-2}(\widetilde W;\mathbb{Z})$ with
push-forward $H_{k-2}(\widetilde W;\mathbb{Z})\to
H_{k-2}(W;\mathbb{Z})$, and by $ \mu_k: H_{k}(G;\mathbb{Z})\to
H_{k-2}(W;\mathbb{Z})$ and $\nu_k: H_{k}(G;\mathbb{Z})\to
H_{k}(Y;\mathbb{Z})$ the Gysin map and the push-forward.
\end{notations}

\begin{lemma}
\label{fult} The following properties hold true.

\medskip
(a) $j_*\circ g^{!}=f^{\star}\circ i_*$, and $f_*\circ
f^{\star}={\text{id}}_{H_{k}(Y;\mathbb{Z})}$;

\medskip
(b) for any $D=\pm\Theta +l\Lambda$, $l\in \mathbb{Z}$,  one has
$g_*\circ g^{!}_{[D]}={\pm\text{id}}_{H_{k}(W;\mathbb{Z})}$; in
particular $g_*\circ g^{!}={\text{id}}_{H_{k}(W;\mathbb{Z})}$, and
$g^{!}_{[D]}$ and $g^{\star}$ are injective;

\medskip (c) $\Im(g^{\star}:H_{k-2}(W;\mathbb{Z})\to
H_{k}(\widetilde W;\mathbb{Z}))= \ker (g_*:H_{k}(\widetilde
W;\mathbb{Z})\to H_{k}(W;\mathbb{Z}))$;

\medskip
(d)  $\gamma\circ j_*\circ
g^{\star}=-{\text{id}}_{H_{k-2}(W;\mathbb{Z})}$; in particular
$j_*\circ g^{\star}$ embeds $H_{k-2}(W;\mathbb{Z})$ into
$H_{k}(P;\mathbb{Z})$;

\medskip
(e) the diagram obtained from (\ref{ndzero}) replacing $g_*$ and
$f_*$ with $g^{!}$ and $f^{\star}$ is commutative:
\begin{equation}\label{nd}
\begin{array}{ccccccc}
H_{k+1}^{BM}(U;\mathbb{Z})&\stackrel
{\partial}{\to}&H_{k}(\widetilde
W;\mathbb{Z})&\stackrel{j_*}{\to}&
H_{k}(P;\mathbb{Z})&{\to}&H_{k}^{BM}(U;\mathbb{Z})\\
\Vert & &\stackrel {g^{!}}{}\uparrow & & \stackrel {f^{\star}}{}\uparrow& &\Vert \\
H_{k+1}^{BM}(U;\mathbb{Z})&\stackrel
{\partial}{\to}&H_{k}(W;\mathbb{Z})&
\stackrel{i_*}{\to}&H_{k}(Y;\mathbb{Z})&{\to}&H_{k}^{BM}(U;\mathbb{Z}).\\
\end{array}
\end{equation}
\end{lemma}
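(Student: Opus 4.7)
For part (a), I would first prove $f_* \circ f^\star = \mathrm{id}$: the morphism $f: P \to Y$ is a proper birational blow-up, so the refined Gysin pullback $f^\star$ is the identity on the open $U = Y \setminus W$ where $f$ is an isomorphism, and $f_*f^\star$ equals $\deg(f)\cdot\mathrm{id} = \mathrm{id}$. The identity $j_*\circ g^! = f^\star \circ i_*$ is the key excess intersection formula for blow-ups (\cite{Fulton}, Proposition $6.7$ and Theorem $6.7$, the ``Key Formula''), applied to the regular (codimension $2$) embedding $i: W\hookrightarrow Y$---note $W = G\cap X$ is l.c.i., albeit singular. The excess normal bundle $\mathcal E = g^*\mathcal N_{W/Y}/\mathcal N_{\widetilde W/P}$ has rank $1$, and a short computation using $P\subset E$ together with the explicit description of $\Theta$ identifies $c_1(\mathcal E)$ with $[P]|_{\widetilde W}$, the class used to define $g^!$. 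The passage from Chow groups to integral Borel--Moore homology is via the cycle class map (\cite{Fulton}, Chapter $19$).

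Parts (b), (c) and (d) are computations on the $\mathbb P^1$-bundle $g: \widetilde W\to W$. For (b), the projection formula gives $g_*(g^\star(\alpha)\frown c) = \deg(c|_{\text{fibre}})\cdot\alpha$ for any $c\in H^2(\widetilde W;\mathbb Z)$. Since $[\Theta]|_{\widetilde W}$ is the class of a section (fibre-degree $1$) while $[\Lambda]|_{\widetilde W}$ is pulled back from $Y$ (fibre-degree $0$), one deduces $g_*\circ g^!_{[\pm\Theta + l\Lambda]} = \pm\mathrm{id}$; the case $\oc_E(P) = \oc_E(\Theta + k\Lambda)$ gives $g_*\circ g^! = \mathrm{id}$, and injectivity of $g^\star$ and $g^!_{[D]}$ follows from the existence of a left inverse. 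For (c), the projective bundle (Leray--Hirsch) decomposition $H_k(\widetilde W;\mathbb Z) = g^!(H_k(W;\mathbb Z))\oplus g^\star(H_{k-2}(W;\mathbb Z))$ holds with integral coefficients since $g$ is Zariski-locally trivial, and the second summand is precisely $\ker g_*$. For (d), the self-intersection formula $j^\star\circ j_* = \,\cdot\frown c_1(\mathcal N_{\widetilde W/P})$ together with the standard identification $\mathcal N_{\widetilde W/P} = \mathcal O_{\widetilde W}(-1)$ (fibre-degree $-1$), combined with the projection-formula argument of (b), yields $\gamma\circ j_*\circ g^\star = -\mathrm{id}$.

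Finally, for (e) the central square of (\ref{nd}) is exactly (a), and the right-hand square commutes because $f^\star$ is the identity on $U$. The left-hand square, which asserts $\partial_P = g^!\circ\partial_Y$, is handled as follows: by (b) and (c) one has $H_k(\widetilde W;\mathbb Z) = \Im g^!\oplus\Im g^\star$, so write $\partial_P = g^!\beta + g^\star\beta'$. Applying $g_*$ and invoking commutativity of the original diagram (\ref{ndzero}) forces $\beta = \partial_Y$. Next, $j_* g^\star\beta' = j_*\partial_P - j_* g^!\partial_Y$ vanishes by exactness of the two rows together with (a); then applying $\gamma$ and using (d) gives $-\beta' = 0$, so $\beta' = 0$. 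The main obstacle I anticipate lies in (a): precisely matching $c_1(\mathcal E)$ with $[P]|_{\widetilde W}$ through the interaction between the embedding $P\subset E$ and the projectivized conormal bundle description of the exceptional divisor---a routine but convention-sensitive computation that must be executed carefully in order to justify the definition of $g^!$ used throughout the rest of the section.
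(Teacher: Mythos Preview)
Your proposal is correct, and parts (b)--(e) follow essentially the same strategy as the paper: Leray--Hirsch for the $\mathbb P^1$-bundle, the self-intersection computation for the exceptional divisor, and the same decomposition-plus-diagram-chase for the left square of (e). The genuine difference lies in part (a). For the identity $j_*\circ g^! = f^\star\circ i_*$, the paper does not invoke the excess intersection formula at all; instead it observes that $f^\star$ admits a description parallel to that of $g^!$, namely as the Gysin map $H_k(Y;\mathbb Z)\to H_{k+2}(E;\mathbb Z)$ for the section $Y\hookrightarrow E$ followed by $\frown [P]$, after which the desired equality is simply the commutativity of two elementary squares (push-forward with Gysin, and push-forward with cap product) inside the ambient bundle $E$. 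Likewise, $f_*\circ f^\star = \mathrm{id}$ is read off from the fact that $\Theta\cong Y$ is a section of $E\to Y$ and $\mathcal O_E(P) = \mathcal O_E(\Theta + k\Lambda)$. Your route via Fulton's excess formula is equally valid, and the identification you flag as the main obstacle does go through: with $N_{W/Y} = \mathcal O_W(k)\oplus\mathcal O_W(d)$ and $N_{\widetilde W/P} = \mathcal O_E(-\Theta + d\Lambda)|_{\widetilde W}$ one computes $c_1(\mathcal E) = [\Theta + k\Lambda]|_{\widetilde W} = [P]|_{\widetilde W}$. The paper's approach is more self-contained and keeps the geometry of $P\subset E$ in the foreground, avoiding the excess-intersection machinery entirely; your approach is more conceptual and would transfer unchanged to any blow-up along a regular codimension-two centre, at the price of the convention-sensitive bookkeeping you anticipated.
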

\begin{proof}
(a) and (b) By functoriality, one may construct $f^{\star}$ in a
similar way as $g^!$, i.e. composing the Gysin map
$H_{k}(Y;\mathbb{Z})$ $\to H_{k+2}(E;\mathbb{Z})$ with the
cap-product $\frown\,[P]:H_{k+2}(E;\mathbb{Z}) {\to}
H_{k}(P;\mathbb{Z})$ (\cite{Fulton}, p. 371, (2)). Therefore the
equality $j_*\circ g^{!}=f^{\star}\circ i_*$ follows from  the
commutativity of the two little squares in the following diagram:
$$
\begin{array}{ccc}
H_k(\widetilde W;\mathbb Z)&\stackrel{j_*}{\to}& H_k(P;\mathbb Z) \\
\stackrel {\frown\,[P]}{}\uparrow \quad\quad&     &  \quad\quad\uparrow\stackrel {\frown\,[P]}{}\\
H_{k+2}(\widetilde W;\mathbb Z)&\stackrel{}{\to}& H_{k+2}(E;\mathbb Z) \\
\stackrel {}{}\uparrow &     &  \uparrow\stackrel {}{}\\
H_k(W;\mathbb Z)&\stackrel{i_*}{\to}& H_k(Y;\mathbb Z) \\
\end{array}
$$
(the lower vertical maps are the Gysin maps, and
$H_{k+2}(\widetilde W;\mathbb Z)\to H_{k+2}(E;\mathbb Z)$ is the
push-forward).

As for the map $f_*\circ f^{\star}$, first observe that it is
equal to the composition of the map $H_{k}(Y;\mathbb{Z})$ $\to
H_{k+2}(E;\mathbb{Z})$ with  $\frown\,[P]:H_{k+2}(E;\mathbb{Z})
{\to} H_{k}(E;\mathbb{Z})$ and push-forward $H_{k}(E;\mathbb{Z})$
$\to H_{k}(Y;\mathbb{Z})$. Now pick any $y\in H_{k}(Y;\mathbb{Z})$
and denote by $\tilde y$ its image in $H_{k+2}(E;\mathbb{Z})$.
Since capping $\tilde y$ with $[\Lambda]$ and pushing-forward it gives
$0\in H_{k}(Y;\mathbb{Z})$, then previous composition is the same
as composing $H_{k}(Y;\mathbb{Z})$ $\to H_{k+2}(E;\mathbb{Z})$
with $\frown\,[\Theta]:H_{k+2}(E;\mathbb{Z}) {\to}
H_{k}(E;\mathbb{Z})$ and push-forward $H_{k}(E;\mathbb{Z})$ $\to
H_{k}(Y;\mathbb{Z})$ (recall that $\oc_{E}(P)=\oc_{E}(\Theta
+k\Lambda)$). And this map is equal to
${\text{id}}_{H_{k}(Y;\mathbb{Z})}$ because $\Theta\cong Y$ is a
section of the projective bundle $E\to Y$.

For the same reason one has $g_*\circ g^{!}_{[D]}=g_*\circ
g^{!}_{[\pm \Theta]}={\pm\,\text{id}}_{H_{k}(W;\mathbb{Z})}$.

\smallskip
(c) Fix any $D=\pm \Theta +l\Lambda$. Since $[D]$ restricts to
$\pm 1\in H^{2}(g^{-1}(w);\mathbb{Z})$ for any $w\in W$, then it
determines a cohomology extension of the fiber-bundle
$g:\widetilde W\to W$ (\cite{Spanier}, p. 256). This in turn
induces, by the Leray-Hirsh Theorem (\cite{Spanier}, p. 258), a
decomposition
\begin{equation}\label{spanier}
H_{k}(\widetilde W;\mathbb{Z})\cong H_{k}(W;\mathbb{Z})\oplus
H_{k-2}(W;\mathbb{Z})
\end{equation}
given by the isomorphism $\tilde w\in H_{k}(\widetilde
W;\mathbb{Z})\to (g_*(\tilde w), g_*(\tilde w\frown
[D]_{|{\widetilde W}}))\in H_{k}(W;\mathbb{Z})\oplus
H_{k-2}(W;\mathbb{Z})$. Using (b) one sees that the inverse map
$H_{k}(W;\mathbb{Z})\oplus H_{k-2}(W;\mathbb{Z})\to
H_{k}(\widetilde W;\mathbb{Z})$ is the sum  of
$\pm\,g^{!}_{[D]}:H_{k}(W;\mathbb{Z})\to H_{k}(\widetilde
W;\mathbb{Z})$ with $\pm\,g^{\star}:H_{k-2}(W;\mathbb{Z})\to
H_{k}(\widetilde W;\mathbb{Z})$. It follows that $\Im g^{\star}=
\ker g_*$.

\smallskip
(d) Since $\oc_{P}(\widetilde W)\otimes \oc_{\widetilde W}=
\oc_{E}(-\Theta +d\Lambda)\otimes \oc_{\widetilde W}$ then
composing $j_*\circ g^{\star}: H_{k-2}(W;\mathbb{Z}) \to
H_{k}(P;\mathbb{Z})$ with Gysin map $H_{k}(P;\mathbb{Z})\to
H_{k-2}(\widetilde W;\mathbb{Z})$ we get the map $g^{!}_{[D]}$
induced by $D=-\Theta +d\Lambda$. Therefore  $\gamma\circ j_*\circ
g^{\star}=g_*\circ
g^{!}_{[D]}=-{\text{id}}_{H_{k-2}(W;\mathbb{Z})}$ by (b).

\smallskip
(e) The central square diagram in (\ref{nd}) commutes by (a).
Moreover the right square commutes by functoriality of the Gysin
maps. So we only have to prove the left square diagram commutes.
To this purpose, denote by $h_*:H_{k+1}^{BM}(U;\mathbb{Z})\to
H_{k+1}^{BM}(U;\mathbb{Z})$ the push-forward isomorphism on the
left of the diagram (\ref{ndzero}), so that we have $\partial\circ
h_*=g_*\circ\partial$. We have to prove that $\partial=g^{!}\circ
\partial\circ h_*$. Pick any $u\in H_{k+1}^{BM}(U;\mathbb{Z})$ and
let $\tilde w:=\partial(u)-(g^{!}\circ \partial\circ h_*)(u)\in
H_{k}(\widetilde W;\mathbb{Z})$. By (b) and the commutativity of
(\ref{ndzero}) we have $g_*(\tilde
w)=g_*(\partial(u))-g_*(g^{!}(\partial(h_*(u))))=g_*(\partial(u))-\partial(h_*(u))=0$.
Therefore by (c) we deduce that $\tilde w=g^{\star}(w)$ for some
$w\in H_{k-2}(W;\mathbb{Z})$. On the other hand by (a) and
(\ref{ndzero}) we see that $j_*(g^{\star}(w))=j_*(\tilde w
)=j_*(\partial(u))-(j_*\circ g^{!}\circ
\partial\circ h_*)(u)=-(f^{\star}\circ i_*\circ
\partial\circ h_*)(u)=0$. This implies $\tilde w=0$ because by (d) we know that $j_*\circ g^{\star}$ is
injective.
\end{proof}

\begin{proposition}
\label{ndecscopp} The  sequence
$$
0\to H_{k}(W;\mathbb{Z})\stackrel
{\alpha}{\longrightarrow}H_{k}(\widetilde W;\mathbb{Z})\oplus
H_{k}(Y;\mathbb{Z})\stackrel
{\beta}{\longrightarrow}H_{k}(P;\mathbb{Z})\to 0
$$
with $\alpha(w)=(g^{!}(w),-i_{*}(w))$ and $\beta(\tilde
w,y)=j_*(\tilde w)+f^{\star}(y)$, is exact.
\end{proposition}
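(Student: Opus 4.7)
The plan is to prove exactness of the sequence by a diagram chase in the two commutative diagrams (\ref{ndzero}) and (\ref{nd}) of Notations \ref{ng}, relying on the formulas collected in Lemma \ref{fult}. The key observation is that Lemma \ref{fult}(e) makes the diagram (\ref{nd}) commute with the Gysin-type maps $g^!$ and $f^\star$ going upward, so the pair $(\alpha,\beta)$ lives naturally in the same framework as the long exact sequences of the pair $(P,\widetilde W)$ and $(Y,W)$.

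First I would dispose of the easy parts. Injectivity of $\alpha$ is immediate from Lemma \ref{fult}(b), which says $g^!$ is injective (indeed $g_*\circ g^!=\mathrm{id}$). The composition $\beta\circ\alpha=0$ is the identity $j_*\circ g^!=f^\star\circ i_*$ of Lemma \ref{fult}(a). For surjectivity of $\beta$, given $p\in H_k(P;\mathbb Z)$ I would set $y:=f_*(p)$ and look at the image of $p-f^\star(y)$ in $H_k^{BM}(U;\mathbb Z)$: by the commutativity of the right square in (\ref{ndzero}) this image equals $f_*(p)-f_*(f^\star(y))=0$ using Lemma \ref{fult}(a), hence $p-f^\star(y)$ lies in the image of $j_*$ by the exactness of the top row, and this provides the required $\tilde w$.

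The central step is exactness at the middle, and this is where I expect the main (small) obstacle: $j_*$ is not injective, so one cannot argue purely at the level of $H_k(\widetilde W;\mathbb Z)$. Suppose $\beta(\tilde w,y)=j_*(\tilde w)+f^\star(y)=0$. Projecting to $H_k^{BM}(U;\mathbb Z)$ and using that $j_*$ maps into the kernel of the projection, while the right square in (\ref{nd}) commutes, I get that $y$ lies in the kernel of $H_k(Y;\mathbb Z)\to H_k^{BM}(U;\mathbb Z)$; by the bottom exact sequence of (\ref{ndzero}) we can write $y=-i_*(w)$ for some $w\in H_k(W;\mathbb Z)$. Then applying $j_*$ to $\tilde w-g^!(w)$ and using Lemma \ref{fult}(a) again gives $j_*(\tilde w-g^!(w))=j_*(\tilde w)+f^\star(i_*(w))=j_*(\tilde w)-f^\star(y)\cdot(-1)$, which one checks equals $\beta(\tilde w,y)=0$; hence $\tilde w-g^!(w)\in\ker j_*=\mathrm{Im}\,\partial$ in the top row.

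To close the argument I would use the commutativity of the left square in (\ref{nd}), which reads $\partial=g^!\circ\partial$ (with the natural identification of $H_{k+1}^{BM}(U;\mathbb Z)$ with itself). Writing $\tilde w-g^!(w)=\partial(u)=g^!(\partial(u))$, I set $w':=w+\partial(u)\in H_k(W;\mathbb Z)$, so that $\tilde w=g^!(w')$. Finally, since $i_*\circ\partial=0$ by exactness of the bottom row of (\ref{ndzero}), one has $i_*(w')=i_*(w)=-y$, and therefore $(\tilde w,y)=(g^!(w'),-i_*(w'))=\alpha(w')$. This proves exactness at the middle and concludes the proof.
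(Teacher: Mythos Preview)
Your proof is correct and follows the same strategy as the paper's: injectivity of $\alpha$ via Lemma~\ref{fult}(b), $\beta\circ\alpha=0$ via Lemma~\ref{fult}(a), surjectivity of $\beta$ by comparing $p$ with $f^\star f_*(p)$ in $H_k^{BM}(U;\mathbb Z)$, and exactness at the middle by the diagram chase in (\ref{nd}) that the paper only alludes to but you spell out in full. One cosmetic slip: in the displayed computation you write $j_*(\tilde w)+f^\star(i_*(w))$ where it should be $j_*(\tilde w)-f^\star(i_*(w))$ (since $j_*g^!=f^\star i_*$), but with $i_*(w)=-y$ this indeed gives $j_*(\tilde w)+f^\star(y)=\beta(\tilde w,y)=0$, so the conclusion stands.
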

\begin{proof} The map $\alpha$ is injective because $g^{!}$ is
by Lemma \ref{fult}, (b). To prove that $\beta$ is surjective, fix
$p\in H_{k}(P;\mathbb{Z})$. From the commutativity of
(\ref{ndzero}) and (\ref{nd}) (Lemma \ref{fult}, (e)), it follows
that the image of $(f^{\star}\circ f_*)(p)-p$ in
$H_{k}^{BM}(U;\mathbb{Z})$ is $0$. Hence there exists $\tilde w\in
H_{k}(\widetilde W;\mathbb{Z})$ such that $j_*(\tilde
w)=(f^{\star}\circ f_*)(p)-p$, i.e. $p=(f^{\star}\circ
f_*)(p)-j_*(\tilde w)=\beta(-\tilde w,f_*(p))$. This proves that
$\beta$ is surjective. Finally observe that $\beta\circ \alpha=0$
by the commutativity of (\ref{nd}), and that $\ker(\beta)\subseteq
\Im(\alpha)$ by diagram chasing in (\ref{nd}).
\end{proof}

\begin{corollary}
\label{ndecscoppdue} The map $(w,y)\in H_{k-2}(W;\mathbb{Z})\oplus
H_{k}(Y;\mathbb{Z})\to j_*(g^{\star}(w))+f^{\star}(y)\in
H_{k}(P;\mathbb{Z}) $ is an isomorphism, whose inverse map  is
given by $p\in H_{k}(P;\mathbb{Z}) \to (-\gamma(p),f_*(p))\in
H_{k-2}(W;\mathbb{Z})\oplus H_{k}(Y;\mathbb{Z})$. In particular,
via this isomorphism, the push-forward $H_{k}(G;\mathbb{Z})\to
H_{k}(P;\mathbb{Z})$ identifies with the map $x\in
H_{k}(G;\mathbb{Z})\to (-\mu_{k}(x),\nu_{k}(x))\in
H_{k-2}(W;\mathbb{Z})\oplus H_{k}(Y;\mathbb{Z})$.
\end{corollary}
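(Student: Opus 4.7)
My plan is to combine the short exact sequence of Proposition~\ref{ndecscopp} with the splitting of $H_k(\widetilde W;\mathbb Z)$ coming from Lemma~\ref{fult}(c), and then verify the explicit inverse formula using the identities collected in Lemma~\ref{fult}.

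First, Lemma~\ref{fult}(b)--(c) gives the direct sum decomposition $H_k(\widetilde W;\mathbb Z)=g^{!}(H_k(W;\mathbb Z))\oplus g^{\star}(H_{k-2}(W;\mathbb Z))$, both summands being injective images. In the exact sequence $0\to H_k(W;\mathbb Z)\xrightarrow{\alpha}H_k(\widetilde W;\mathbb Z)\oplus H_k(Y;\mathbb Z)\xrightarrow{\beta}H_k(P;\mathbb Z)\to 0$ of Proposition~\ref{ndecscopp}, the image of $\alpha(w)=(g^{!}(w),-i_*(w))$ lies in the subgroup $g^{!}(H_k(W;\mathbb Z))\oplus H_k(Y;\mathbb Z)$ and coincides with the graph of the map $-i_*\circ(g^{!})^{-1}$. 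Passing to the quotient by $\Im\alpha$, the remaining factor is $g^{\star}(H_{k-2}(W;\mathbb Z))\oplus H_k(Y;\mathbb Z)\cong H_{k-2}(W;\mathbb Z)\oplus H_k(Y;\mathbb Z)$, and $\beta$ descends precisely to the map $(w,y)\mapsto j_*(g^{\star}(w))+f^{\star}(y)$, which is therefore an isomorphism.

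Next I would verify that $\psi(p):=(-\gamma(p),f_*(p))$ provides its inverse. On $p=j_*(g^{\star}(w))+f^{\star}(y)$ the second coordinate comes out to $y$ immediately: $f_*\circ f^{\star}=\mathrm{id}$ by Lemma~\ref{fult}(a), while $f_*\circ j_*\circ g^{\star}=i_*\circ g_*\circ g^{\star}=0$ because $\Im g^{\star}=\ker g_*$ by Lemma~\ref{fult}(c). For the first coordinate, Lemma~\ref{fult}(d) gives $\gamma\circ j_*\circ g^{\star}=-\mathrm{id}$, so the whole argument reduces to the vanishing $\gamma\circ f^{\star}=0$. This will be the main obstacle: I would establish it by applying base change for Gysin maps to the Cartesian square with vertical arrows $g,f$ and horizontal arrows $j,i$---noting that $W=G\cap X$ is a regular codimension-two embedding in $Y$ since $G$ and $X$ are smooth divisors---so that $\mathrm{Gysin}_{\widetilde W\subset P}\circ f^{\star}$ factors through $g^{\star}$ (with the excess intersection terms also landing in $\Im g^{\star}$), and then $g_*\circ g^{\star}=0$ from Lemma~\ref{fult}(c) forces the composition with $g_*$ to vanish.

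Finally, for the ``in particular'' part I would apply $\psi$ to the image of $x\in H_k(G;\mathbb Z)$ in $H_k(P;\mathbb Z)$. Since the composition $G\hookrightarrow P\xrightarrow{f}Y$ is the original inclusion $G\hookrightarrow Y$, the second coordinate is $\nu_k(x)$. For the first, since $W=G\cap X$ is a Cartier divisor in the smooth $G$ one has $\widetilde G\cong G$ and $\widetilde G\cap\widetilde W$ is a section of $g$ isomorphic to $W$; functoriality of the Gysin construction then identifies $\gamma$ composed with the push-forward from $G$ with the Gysin map $\mu_k$, giving $(-\mu_k(x),\nu_k(x))$ as claimed. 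All steps except the vanishing $\gamma\circ f^{\star}=0$ are formal consequences of Proposition~\ref{ndecscopp} and Lemma~\ref{fult}.
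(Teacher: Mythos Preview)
Your argument follows the paper's line exactly: obtain bijectivity of $(w,y)\mapsto j_*(g^{\star}w)+f^{\star}y$ from Proposition~\ref{ndecscopp} together with the Leray--Hirsch splitting (\ref{spanier}), and then read off the inverse via the identities of Lemma~\ref{fult}. You are in fact more explicit than the paper in isolating the remaining identity $\gamma\circ f^{\star}=0$ (the paper simply cites (a) and (d)); one small caveat is that the base-change you invoke does not literally apply to the blow-up square, since $f^{\star}$ is neither a flat pull-back nor an lci Gysin --- the clean fix, in the spirit of the proof of Lemma~\ref{fult}(a), is to pass to the ambient bundle $E$: composing $\frown[P]$ with the Gysin for $\widetilde W\subset P$ gives the Gysin for the regular codimension-two embedding $\widetilde W\subset E$, and since $\widetilde W=E\times_Y W$ with $E\to Y$ flat, ordinary base change yields $j^{\star}\circ f^{\star}=g^{\star}\circ i^{!}$, which lies in $\ker g_*$ by (c).
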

\begin{proof} By Proposition \ref{ndecscopp} we know that for any $p\in H_{k}(P;\mathbb{Z})$
there are $\tilde w\in H_{k}(\widetilde W;\mathbb{Z})$ and $y\in
H_{k}(Y;\mathbb{Z})$ such that $p= j_*(\tilde w)+f^{\star}(y)$. By
(\ref{spanier}) we may write $\tilde w=g^{!}(w_1)+g^{\star}(w_2)$
for suitable $w_1\in H_{k}(W;\mathbb{Z})$ and $w_2\in
H_{k-2}(W;\mathbb{Z})$. Therefore $p=
j_*(g^{\star}(w_2))+j_*(g^{!}(w_1))+f^{\star}(y)$, and by Lemma
\ref{fult}, (a), we get $p=
j_*(g^{\star}(w_2))+f^{\star}(y+i_*w_1)$. This proves that the
given map is onto. Moreover if $j_*(g^{\star}(w))+f^{\star}(y)=0$
then by Proposition \ref{ndecscopp} we may write
$(g^{\star}(w),y)=(g^{!}(u),-i_*(u))$ for some $u\in
H_{k}(W;\mathbb{Z})$. Again by (\ref{spanier}) we deduce
$g^{\star}(w)=g^{!}(u)=0$, and so $u=0$ by the injectivity of
$g^{!}$ (Lemma \ref{fult}, (b)). This proves that the given map is
injective. As for the description of its inverse, it follows from
Lemma \ref{fult}, (a) and (d).
\end{proof}

\begin{corollary}
\label{nimp} Let $X_t\in |H^0(Y,\ic_{W,Y}(d))|$ be a smooth
hypersurface containing $W$. Let $H_{m+2}(P;\mathbb{Z})\to
H^{m}(X_t;\mathbb{Z})$ be the map obtained composing the Gysin map
$H_{m+2}(P;\mathbb{Z})\to H_{m}(X_t;\mathbb{Z})$ (induced by the
natural inclusion $X_t\hookrightarrow P$) with Poincar\'e duality.
Then the image of this map is equal to
$H^m(Y;\mathbb{Z})+H^{m}(X_t;\mathbb{Z})_W$.
\end{corollary}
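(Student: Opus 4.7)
The plan is to combine the isomorphism of Corollary \ref{ndecscoppdue} with a summand-by-summand analysis of the Gysin-Poincar\'e composition. Every class in $H_{m+2}(P;\mathbb Z)$ is uniquely of the form $j_*(g^\star(w))+f^\star(y)$ with $w\in H_m(W;\mathbb Z)$ and $y\in H_{m+2}(Y;\mathbb Z)$, so it suffices to show that the first summand maps onto $H^m(X_t;\mathbb Z)_W$ and the second onto $H^m(Y;\mathbb Z)$ (viewed via the Lefschetz inclusion).

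For the $f^\star(y)$ summand I would exploit the fact that $W$ is a Cartier divisor of the smooth variety $X_t$, so the blow-up $f:P\to Y$ restricts to an isomorphism $\widetilde X_t\to X_t$. By the naturality of Gysin maps with respect to the commutative diagram of inclusions $\widetilde X_t\hookrightarrow P$, $X_t\hookrightarrow Y$ (intertwined by $f$ and its restriction on $X_t$), the composition $H_{m+2}(Y;\mathbb Z)\stackrel{f^\star}{\to}H_{m+2}(P;\mathbb Z)\to H_m(X_t;\mathbb Z)$ agrees with the Gysin map of $X_t\hookrightarrow Y$. Composed with Poincar\'e duality on the two smooth varieties, this becomes the pull-back $H^m(Y;\mathbb Z)\to H^m(X_t;\mathbb Z)$, whose image is precisely $H^m(Y;\mathbb Z)$.

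For the $j_*(g^\star(w))$ summand I would first compute $[\widetilde X_t]|_{\widetilde W}$. From the equality $f^*[X_t]=[\widetilde X_t]+[\widetilde W]$ in $H^2(P;\mathbb Z)$ together with $\oc_P(\widetilde W)\otimes \oc_{\widetilde W}=\oc_E(-\Theta+d\Lambda)\otimes \oc_{\widetilde W}$ (already used in Lemma \ref{fult}(d)), one finds $[\widetilde X_t]|_{\widetilde W}=\Theta|_{\widetilde W}$. Since $\Theta$ is a section of the projective bundle $E\to Y$, the intersection $\widetilde X_t\cap \widetilde W$ is the section of $g:\widetilde W\to W$ cut out by $\Theta$, mapping isomorphically to $W$. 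The base change / projection formula for Gysin maps applied to this Cartesian square then identifies the Gysin image of $j_*(g^\star(w))$ in $H_m(X_t;\mathbb Z)$ with the push-forward of $w$ along $W\hookrightarrow X_t$, which via Poincar\'e duality is by definition $H^m(X_t;\mathbb Z)_W$.

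Summing the two contributions yields the announced equality. The hard part will be justifying carefully the naturality and base change formulas for Gysin maps in spite of the isolated singularities of $P$; since $\text{Sing}(P)$ lies over the isolated singular points of $W$ and the intersection $\widetilde X_t\cap \widetilde W\cong W$ meets this locus in only finitely many points, the standard Fulton-Baum formalism (\cite{Fulton}, \cite{Baum}) should apply without essential modification.
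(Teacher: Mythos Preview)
Your proposal is correct and is precisely the argument the paper has in mind: the authors state Corollary~\ref{nimp} without proof, as an immediate consequence of the decomposition in Corollary~\ref{ndecscoppdue}, and your summand-by-summand tracking through the Gysin map is exactly how that deduction goes. One small simplification: you need not compute $[\widetilde X_t]|_{\widetilde W}$ via $f^*[X_t]=[\widetilde X_t]+[\widetilde W]$, since property~(b) of the geometric description already tells you that the strict transforms $\widetilde X_t$ lie in the trace of $|\Theta|$ on $P$, so $[\widetilde X_t]|_{\widetilde W}=[\Theta]|_{\widetilde W}$ directly; and since $\widetilde X_t$ is an effective Cartier divisor on $P$ meeting $\widetilde W$ in the expected dimension, the base-change formula for Gysin maps applies without any special care about the isolated singularities of $P$.
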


\medskip
Now consider the following commutative diagrams:

\begin{equation}\label{pqx}
\begin{array}{ccccc}
H_{m+2}(P;\mathbb{Z})&\\
 \downarrow &\searrow & &\\
H_{m+2}(Q;\mathbb{Z})& \stackrel{}{\rightarrow}& H^{m}(X_t;\mathbb{Q})\\
\end{array}
\end{equation}

\medskip\noindent
and

\begin{equation}\label{nndzero}
\begin{array}{ccccccc}
H_{k}(G;\mathbb{Z})&\stackrel
{}{\to}&H_{k}(P;\mathbb{Z})&\stackrel{}{\to}
&H_{k}(P,G;\mathbb{Z})&{\to}&H_{k-1}(G;\mathbb{Z})\\
\downarrow  & &\stackrel {}{}\downarrow & & \stackrel {}{}\Vert& &\downarrow  \\
H_{k}(\{v_{\infty}\};\mathbb{Z})&\stackrel
{}{\to}&H_{k}(Q;\mathbb{Z})&\stackrel{}{\to}&H_{k}(Q,\{v_{\infty}\};\mathbb{Z})
&{\to}&H_{k-1}(\{v_{\infty}\};\mathbb{Z}).\\
\end{array}
\end{equation}

\medskip
\noindent In diagram (\ref{pqx}) the vertical map is the
push-forward corresponding to the natural projection $P\to Q$, and
the other maps are obtained composing the Gysin maps
$H_{m+2}(P;\mathbb{Z})\to H_{m}(X_t;\mathbb{Z})$ and
$H_{m+2}(Q;\mathbb{Z})\to H_{m}(X_t;\mathbb{Z})$  with Poincar\'e
duality $H_{m}(X_t;\mathbb{Z})\cong H^{m}(X_t;\mathbb{Z})$. The
rows appearing in diagram (\ref{nndzero}) are the exact sequences
of the pairs $(P,G)$  and $(Q,\{v_{\infty}\})$, and the vertical
maps denote push-forward (compare with \cite{La}, p. 23).

Combining Theorem \ref{finv}, (a), and Corollary \ref{nimp} with
(\ref{pqx}), we see that (\ref{ninv}) holds if and only if
$H_{m+2}(P;\mathbb{Z})$ maps onto $H_{m+2}(Q;\mathbb{Z})$. On the
other hand, by diagram (\ref{nndzero}) with $k=m+2$, we deduce
that $H_{m+2}(P;\mathbb{Z})\to H_{m+2}(Q;\mathbb{Z})$ is
surjective if and only if the push-forward
$H_{m+1}(G;\mathbb{Z})\to H_{m+1}(P;\mathbb{Z})$ is injective, and
by Corollary \ref{ndecscoppdue} this is equivalent to say that
$\ker \mu_{m+1}\cap\ker\nu_{m+1}=0$. We notice that, in the case
of rational coefficients, Hard Lefschetz Theorem implies that both
maps $\mu_{m+1}\otimes_{\mathbb Z}\mathbb Q$ and
$\nu_{m+1}\otimes_{\mathbb Z}\mathbb Q$ are injective, i.e. $\ker
\mu_{m+1}$ and $\ker\nu_{m+1}$ are finite torsion groups. Summing
up we have the following:

\begin{proposition}
\label{semi} $I_W(\mathbb Z)=H^m(Y;\mathbb Z)+H^m(X_t;\mathbb
Z)_W$ if and only if $\ker \mu_{m+1}\cap\ker\nu_{m+1}=0$. In
particular $I_W(\mathbb Q)=H^m(Y;\mathbb Q)+H^m(X_t;\mathbb Q)_W$.
\end{proposition}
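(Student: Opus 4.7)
The plan is to thread together the identifications built up in this section and reduce the first assertion to a diagram chase. First I would apply Theorem \ref{finv}(a) to $R = Q$ with $n = m$, using the identification $Q_t \cong X_t$ from item (c) of the geometric description; this gives an isomorphism $H_{m+2}(Q;\mathbb{Z}) \cong I_W(\mathbb{Z})$ realized by the Gysin-Poincar\'e map $i^{\star}_{m}$. Corollary \ref{nimp} simultaneously identifies $H^m(Y;\mathbb{Z}) + H^m(X_t;\mathbb{Z})_W$ as the image of the analogous Gysin-Poincar\'e map from $H_{m+2}(P;\mathbb{Z})$, and via the commutative triangle (\ref{pqx}), the inclusion $H^m(Y;\mathbb{Z}) + H^m(X_t;\mathbb{Z})_W \subseteq I_W(\mathbb{Z})$ becomes an equality if and only if the pushforward $H_{m+2}(P;\mathbb{Z}) \to H_{m+2}(Q;\mathbb{Z})$ is surjective.

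Next I would convert this surjectivity into an injectivity condition one degree lower using the ladder (\ref{nndzero}) at $k = m+2$. Because $H_j(\{v_{\infty}\};\mathbb{Z}) = 0$ for $j \geq 1$, the bottom row yields $H_{m+2}(Q;\mathbb{Z}) \cong H_{m+2}(Q,\{v_{\infty}\};\mathbb{Z})$, and the middle vertical equality---coming from excision applied to $P \setminus \widetilde{G} \cong Q \setminus \{v_{\infty}\}$ together with $\widetilde{G} \cong G$---identifies this group with $H_{m+2}(P,G;\mathbb{Z})$. The top row of (\ref{nndzero}) then shows the pushforward is surjective if and only if the pushforward $H_{m+1}(G;\mathbb{Z}) \to H_{m+1}(P;\mathbb{Z})$ is injective. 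Finally Corollary \ref{ndecscoppdue} realizes the latter as the map $x \mapsto (-\mu_{m+1}(x),\nu_{m+1}(x))$ into $H_{m-1}(W;\mathbb{Z}) \oplus H_{m+1}(Y;\mathbb{Z})$, whose kernel is exactly $\ker\mu_{m+1} \cap \ker\nu_{m+1}$, completing the biconditional.

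For the in particular statement, Hard Lefschetz on the smooth varieties $G$ and $Y$ forces both $\mu_{m+1} \otimes_{\mathbb{Z}} \mathbb{Q}$ and $\nu_{m+1} \otimes_{\mathbb{Z}} \mathbb{Q}$ to be injective, so $\ker \mu_{m+1}$ and $\ker \nu_{m+1}$ are finite torsion groups whose intersection becomes trivial after tensoring with $\mathbb{Q}$; the rational equality follows at once. I anticipate the main care point will be keeping everything integral throughout the chain: the key earlier ingredients---Theorem \ref{finv}(a), the excision identifying $H_{m+2}(P,G;\mathbb{Z})$ with $H_{m+2}(Q,\{v_{\infty}\};\mathbb{Z})$, and the direct sum decomposition of Corollary \ref{ndecscoppdue}---all hold over $\mathbb{Z}$, so one must simply verify that no step of the chain secretly inverts torsion, which is precisely what makes the integral biconditional sharp rather than merely the rational statement.
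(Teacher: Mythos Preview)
Your proposal is correct and follows essentially the same route as the paper: reduce to surjectivity of $H_{m+2}(P;\mathbb{Z})\to H_{m+2}(Q;\mathbb{Z})$ via Theorem~\ref{finv}(a), Corollary~\ref{nimp} and (\ref{pqx}); convert this via (\ref{nndzero}) into injectivity of $H_{m+1}(G;\mathbb{Z})\to H_{m+1}(P;\mathbb{Z})$; then read off the kernel condition from Corollary~\ref{ndecscoppdue}, and finish with Hard Lefschetz for the rational statement. The only difference is that you spell out the excision and vanishing details in the diagram chase on (\ref{nndzero}), which the paper leaves implicit.
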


Unfortunately we  are not able to prove that $\ker
\mu_{m+1}\cap\ker\nu_{m+1}=0$ in this generality. This has
prevented us from extending (\cite{IJM2}, Theorem 1.2) in the case
$Y$ is not necessarily a complete intersection. In general it may
happen $\ker\nu_{m+1}\neq 0$, so we expect that $\ker\mu_{m+1}=0$
(which holds true when $W$ is smooth). Since $\ker\nu_{m+1}$ is a
torsion group, it would be sufficient to prove that $\mu_{m+1}$
simply injects the torsion. We will overcome this difficulty later
on, in the proof of Theorem \ref{maintheorem}, assuming
$X_t$ varies in the linear system $|H^0(Y,\ic_{Z,Y}(d))|$ (see
Lemma \ref{gmp} below).

\medskip We conclude this section identifying
the kernel of the push-forward $H_{m}(W;\mathbb Z)\to
H_{m}(X_t;\mathbb Z)$, and the intersection $H^{m}(Y;\mathbb
Z)\cap H_{m}(X_t;\mathbb Z)_W$ in $H^{m}(X_t;\mathbb Z)$. We need
again some preliminaries, the first one is the following:

\begin{lemma}
\label{semid} The Gysin map $\mu_{m+2}: H_{m+2}(G;\mathbb{Z})\to
H_{m}(W;\mathbb{Z})$ is injective.
\end{lemma}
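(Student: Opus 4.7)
The plan is to identify, via Poincar\'e duality and Borel-Moore arguments at the singular locus of $W$, the Gysin map $\mu_{m+2}$ with the pullback in singular cohomology $i^{*}\colon H^{m-2}(G;\mathbb{Z})\to H^{m-2}(W^{sm};\mathbb{Z})$ induced by the inclusion $W^{sm}:=W\setminus\mathrm{Sing}(W)\hookrightarrow G$, and then to prove that $i^{*}$ is injective in two steps: a Lefschetz-type reduction followed by a local analysis at the singularities of $W$.

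First I will establish the identifications. On the source, Poincar\'e duality on the smooth $m$-fold $G$ gives $H_{m+2}(G;\mathbb{Z})\cong H^{m-2}(G;\mathbb{Z})$. On the target, the Borel-Moore exact sequence for the closed inclusion $\mathrm{Sing}(W)\subset W$, combined with the vanishing $H_j^{BM}(\mathrm{Sing}(W);\mathbb{Z})=0$ for $j\geq 1$ and with $m\geq 2$, yields $H_m(W;\mathbb{Z})\cong H_m^{BM}(W^{sm};\mathbb{Z})$; Poincar\'e duality on the smooth non-compact $(2m-2)$-dimensional manifold $W^{sm}$ then gives $H_m^{BM}(W^{sm};\mathbb{Z})\cong H^{m-2}(W^{sm};\mathbb{Z})$. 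Since $W^{sm}$ is a smooth Cartier divisor in the smooth open $G\setminus\mathrm{Sing}(W)$, the refined Gysin map (\cite{Fulton}, p.~382) is compatible under the Borel-Moore identifications with the classical Gysin map in this smooth ambient, which corresponds under Poincar\'e duality to the cohomological restriction; hence $\mu_{m+2}$ is identified with $i^{*}$.

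To prove injectivity of $i^{*}$, I will factor it as
\[
H^{m-2}(G;\mathbb{Z})\to H^{m-2}(W;\mathbb{Z})\to H^{m-2}(W^{sm};\mathbb{Z}).
\]
The first factor is an isomorphism: the complement $G\setminus W=G\cap(Y\setminus X)$ is affine of complex dimension $m$, so Andreotti-Frankel together with the Lefschetz Theorem with Singularities (\cite{GMP3},~\cite{Hamm}, as already used in the proof of Theorem \ref{finv}) implies that $(G,W)$ is $(m-1)$-connected, whence by relative Hurewicz and the Universal Coefficient Theorem $H^k(G,W;\mathbb{Z})=0$ for all $k\leq m-1$.

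For the second factor, the long exact sequence of the pair $(W,W^{sm})$, combined with excision on disjoint contractible neighborhoods of the singular points, gives
\[
H^{m-2}(W,W^{sm};\mathbb{Z})\cong\bigoplus_{p\in\mathrm{Sing}(W)}\widetilde H^{m-3}(L_p;\mathbb{Z}),
\]
where $L_p$ denotes the link of $W$ at $p$. Since $W$ is a Cartier divisor in the smooth variety $G$, near each $p\in\mathrm{Sing}(W)$ the variety $W$ is the zero locus of a single holomorphic function on a local ambient $\mathbb{C}^m$ with an isolated critical point: an isolated hypersurface singularity. Milnor's theorem on the Milnor fibration then implies that $L_p$ is $(m-3)$-connected, so $\widetilde H^{m-3}(L_p;\mathbb{Z})=0$ for $m\geq 4$; the case $m=2$ is trivial since the relevant cohomology lies in degree $-1$. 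Hence $H^{m-2}(W,W^{sm};\mathbb{Z})=0$, the second factor is injective, and consequently $\mu_{m+2}$ is injective. The most delicate step will be verifying the initial identification of $\mu_{m+2}$ with $i^{*}$, which requires carefully comparing the refined Gysin construction with Borel-Moore homology and Poincar\'e duality on the non-compact smooth manifold $W^{sm}$.
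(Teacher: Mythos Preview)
Your argument is correct, but it takes a different route from the paper for the key step. Both proofs begin the same way: via Poincar\'e duality $H_{m+2}(G;\mathbb{Z})\cong H^{m-2}(G;\mathbb{Z})$ and the Lefschetz Hyperplane Theorem with Singularities giving $H^{m-2}(G;\mathbb{Z})\cong H^{m-2}(W;\mathbb{Z})$, the problem reduces to the injectivity of the duality morphism $H^{m-2}(W;\mathbb{Z})\to H_m(W;\mathbb{Z})$. At this point the paper introduces a general (hence smooth) hyperplane section $W'\subset W$ and uses the commutative triangle
\[
\begin{array}{ccc}
H^{m-2}(W;\mathbb{Z}) & & \\
\downarrow & \searrow & \\
H_m(W;\mathbb{Z}) & \longrightarrow & H^{m-2}(W';\mathbb{Z})
\end{array}
\]
together with the injectivity of the diagonal pull-back (again Lefschetz with Singularities, \cite{GMP3}, \cite{Hamm}) to conclude. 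You instead identify $H_m(W;\mathbb{Z})\cong H^{m-2}(W^{sm};\mathbb{Z})$ via Borel--Moore duality and prove injectivity of $H^{m-2}(W;\mathbb{Z})\to H^{m-2}(W^{sm};\mathbb{Z})$ by a purely local computation: excision at the isolated singular points and Milnor's connectivity theorem for links of isolated hypersurface singularities (the link in $\mathbb{C}^m$ is $(m-3)$-connected, which suffices since $m$ is even here). The paper's approach stays entirely within the Lefschetz-theoretic toolkit used throughout and avoids any local analysis, while your approach is more intrinsic to $W$---it does not require an auxiliary slice $W'$---at the cost of invoking a separate deep input (Milnor's theorem). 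Both are valid; the paper's choice is more in keeping with the surrounding arguments, whereas yours makes explicit use of the isolated-hypersurface nature of $\mathrm{Sing}(W)$.
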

\begin{proof} Consider the
following natural commutative diagram
$$
\begin{array}{ccccc}
 H^{m-2}(W;\mathbb Z)&&  \\
\downarrow &\searrow & & \\
 H_{m}(W;\mathbb Z) & \longrightarrow & H^{m-2}(W';\mathbb Z)\cong H_{m-2}(W';\mathbb Z)& &\\
\end{array}
$$
where the vertical map $H^{m-2}(W;\mathbb Z)\to H_{m}(W;\mathbb
Z)$ is the duality morphism (\cite{crory}, p. 150), $W'$ is the
general hyperplane section of $W$ ($W'$ is smooth),
$H^{m-2}(W;\mathbb Z)\to H^{m-2}(W';\mathbb Z)$ is the pull-back,
and $H_{m}(W;\mathbb Z)\to H^{m-2}(W';\mathbb Z)$ is the Gysin map
$H_{m}(W;\mathbb Z)\to H_{m-2}(W';\mathbb Z)$ composed with with
Poincar\'e duality $H_{m-2}(W';\mathbb Z)\cong H^{m-2}(W';\mathbb
Z)$. By Lefschetz Hyperplane Theorem with Singularities
(\cite{GMP3}, \cite{Hamm}) we know that $H^{m-2}(W;\mathbb Z)\to
H^{m-2}(W';\mathbb Z)$ is injective. Therefore also the duality
morphism is. This proves our claim, because $\mu_{m+2}$ identifies
with the duality morphism in view of Poincar\'e duality
$H_{m+2}(G;\mathbb Z)\cong H^{m-2}(G;\mathbb Z)$ and Lefschetz
Hyperplane Theorem again $H^{m-2}(G;\mathbb Z)\cong
H^{m-2}(W;\mathbb Z)$.
\end{proof}

\medskip
Now consider the following commutative diagram:
$$
\begin{array}{ccccc}
H_{m+2}(G;\mathbb Z) &\stackrel{\nu_{m+2}}\to & H_{m+2}(Y;\mathbb Z) \cong H^{m}(Y;\mathbb Z)\\
\stackrel {\mu_{m+2}}{} \downarrow & & \stackrel {}{}\downarrow &\\
H_{m}(W;\mathbb Z) & \stackrel{}{\to}&
H_{m}(X_t;\mathbb Z) \cong H^{m}(X_t;\mathbb Z),\\
\end{array}
$$
where the bottom map denotes push-forward, and the right vertical
map pull-back identified with Gysin map via Poincar\'e duality. By
Lemma \ref{semid} we know that $\mu_{m+2}$ is injective. Hence we
obtain a natural inclusion
\begin{equation}\label{inuno}
\ker \nu_{m+2}\hookrightarrow \ker\left(H_{m}(W;\mathbb Z)\to
H_{m}(X_t;\mathbb Z)\right).
\end{equation}
Observe that, by Hard Lefschetz Theorem, $\ker \nu_{m+2}$ is a
torsion group.

Next consider the map $H^{m-2}(Y;\mathbb Z)\to H^{m}(X_t;\mathbb
Z)$ given composing:
$$
H^{m-2}(Y;\mathbb Z)\cong H_{m+4}(Y;\mathbb Z)\stackrel{\frown [G]
}{\longrightarrow} H_{m+2}(Y;\mathbb Z)\cong H^m(Y;\mathbb
Z)\subseteq H^m(X_t;\mathbb Z).
$$
We may obtain this map also composing
$$
H^{m-2}(Y;\mathbb Z)\cong H_{m+4}(Y;\mathbb Z)\stackrel{\frown [W]
}{\longrightarrow} H_{m}(W;\mathbb Z)\to H_m(X;\mathbb Z)\cong
H^m(X_t;\mathbb Z).
$$
Hence  $\Im\left(H^{m-2}(Y;\mathbb Z)\to H^{m}(X_t;\mathbb
Z)\right)$ is a subgroup of $H^{m}(Y;\mathbb Z) \cap
H_{m}(X;\mathbb Z)_W$. On the other hand, by  Lefschetz Hyperplane
Theorem we have $H^{m-2}(G;\mathbb Z)\cong H^{m-2}(Y;\mathbb Z)$.
Therefore, via Poincar\'e duality, there is a natural isomorphism
$\Im\, \nu_{m+2}\cong \Im\left(H^{m-2}(Y;\mathbb Z)\to
H^{m}(X_t;\mathbb Z)\right)$. Summing up we get another natural
inclusion
\begin{equation}\label{indue}
\Im\,\nu_{m+2}\hookrightarrow H^{m}(Y;\mathbb Z) \cap
H_{m}(X;\mathbb Z)_W.
\end{equation}
Observe that,  tensoring  with $\mathbb Q$, the map
$H^{m-2}(Y;\mathbb Q)\to H^{m}(X_t;\mathbb Q)$ becomes injective
by Hard Lefschetz Theorem. So we have $H^{m-2}(Y;\mathbb Q)\cong
\Im\,(\nu_{m+2}\otimes_{\mathbb Z}\mathbb Q)$ and therefore
\begin{equation}\label{intre}
H^{m-2}(Y;\mathbb Q)\hookrightarrow H^{m}(Y;\mathbb Q) \cap
H_{m}(X;\mathbb Q)_W.
\end{equation}
Actually all previous inclusions (\ref{inuno}), (\ref{indue}) and
(\ref{intre}) are equalities. This is the content of the following
Proposition \ref{semit}.

\medskip
\begin{proposition}
\label{semit} Let $\nu_{m+2}: H_{m+2}(G;\mathbb{Z})\to
H_{m+2}(Y;\mathbb{Z})$ be the push-forward. Then there are
canonical isomorphisms $\ker \nu_{m+2}\cong \ker\left(
H_{m}(W;\mathbb Z)\to H_{m}(X_t;\mathbb Z)\right)$ and
$\Im\,\nu_{m+2}\cong H^{m}(Y;\mathbb Z) \cap H_{m}(X;\mathbb
Z)_W$. In particular the push-forward $H_{m}(W;\mathbb Q)$ $\to
H_{m}(X_t;\mathbb Q)$ is injective (so $H_{m}(W;\mathbb Q)\cong
H^{m}(X;\mathbb Q)_W$), and $H^{m-2}(Y;\mathbb Q)\cong
H^{m}(Y;\mathbb Q) \cap H_{m}(X;\mathbb Q)_W$.
\end{proposition}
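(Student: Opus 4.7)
The plan is to study the natural map $\Phi : H_{m+2}(P;\mathbb Z)\to H^{m}(X_t;\mathbb Z)$ from Corollary \ref{nimp}, read through the decomposition $H_{m+2}(P;\mathbb Z)\cong H_{m}(W;\mathbb Z)\oplus H_{m+2}(Y;\mathbb Z)$ of Corollary \ref{ndecscoppdue}. Write $\psi:H_{m}(W;\mathbb Z)\to H_{m}(X_t;\mathbb Z)\cong H^{m}(X_t;\mathbb Z)$ for the push-forward and $\alpha:H_{m+2}(Y;\mathbb Z)\to H_{m}(X_t;\mathbb Z)\cong H^{m}(X_t;\mathbb Z)$ for the Gysin map (which, via Poincar\'e duality, agrees with the Lefschetz pull-back $H^{m}(Y;\mathbb Z)\to H^{m}(X_t;\mathbb Z)$ and is therefore injective). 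The central claim I would establish is that under the above decomposition
\[
\Phi(w,y)=\psi(w)+\alpha(y).
\]

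To check this formula I would argue summand by summand. For $f^{\star}(y)$ with $y\in H_{m+2}(Y;\mathbb Z)$, functoriality of Gysin maps along the proper birational morphism $f:P\to Y$ (which restricts to an isomorphism $\widetilde X_t\cong X_t$) gives $\Phi(0,y)=\alpha(y)$. For $j_{*}(g^{\star}(w))$ with $w\in H_{m}(W;\mathbb Z)$, the projection formula reduces $j_{*}(g^{\star}(w))\frown [\widetilde X_t]_{P}$ to $j_{*}(g^{\star}(w)\frown j^{*}[\widetilde X_t]_{P})$; since $\widetilde X_t\cap \widetilde W$ is the section of $g:\widetilde W\to W$ determined by the $X$-direction and is isomorphic to $W$, this cap product concentrates $g^{\star}(w)$ on that section and, under $j$, delivers exactly $\psi(w)\in H_{m}(X_t;\mathbb Z)$.

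Next I would identify $\ker\Phi$. By diagram (\ref{pqx}) and Theorem \ref{finv}(a), the map $H_{m+2}(Q;\mathbb Z)\hookrightarrow H^{m}(X_t;\mathbb Z)$ is injective, so $\ker\Phi$ equals the kernel of the push-forward $H_{m+2}(P;\mathbb Z)\to H_{m+2}(Q;\mathbb Z)$. Diagram (\ref{nndzero}) with $k=m+2$, together with the excision isomorphism $H_{m+2}(P,G;\mathbb Z)\cong H_{m+2}(Q,\{v_{\infty}\};\mathbb Z)=H_{m+2}(Q;\mathbb Z)$ coming from $P\setminus G\cong Q\setminus \{v_{\infty}\}$, identifies this kernel with the image of the push-forward $H_{m+2}(G;\mathbb Z)\to H_{m+2}(P;\mathbb Z)$, which by Corollary \ref{ndecscoppdue} is
\[
\ker\Phi=\{(-\mu_{m+2}(x),\nu_{m+2}(x)):x\in H_{m+2}(G;\mathbb Z)\}.
\]

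With these two ingredients the proposition falls out. Given $w\in \ker\psi$, the pair $(w,0)$ lies in $\ker\Phi$, so $(w,0)=(-\mu_{m+2}(x),\nu_{m+2}(x))$ for some $x$, forcing $\nu_{m+2}(x)=0$ and $w=-\mu_{m+2}(x)$; combined with the injectivity of $\mu_{m+2}$ (Lemma \ref{semid}) this yields $\ker\nu_{m+2}\cong \ker\psi$. Given $\xi\in H^{m}(Y;\mathbb Z)\cap H_{m}(X;\mathbb Z)_{W}$, write $\xi=\psi(w)=\alpha(y)$ (the class $y$ being unique by injectivity of $\alpha$); then $(w,-y)\in \ker\Phi$ forces $y\in \Im\nu_{m+2}$, hence $\xi\in\alpha(\Im\nu_{m+2})$, and together with inclusion (\ref{indue}) this proves $\Im\nu_{m+2}\cong H^{m}(Y;\mathbb Z)\cap H_{m}(X;\mathbb Z)_{W}$. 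The rational assertions come from Hard Lefschetz, which makes cup product with $[G]$ injective on $H^{m-2}(Y;\mathbb Q)$ and hence $\nu_{m+2}\otimes_{\mathbb Z}\mathbb Q$ injective with image identified with $H^{m-2}(Y;\mathbb Q)$. I expect the main obstacle to be the identification of $\Phi$ in the first step: although morally transparent, the projection-formula computation on $\widetilde W$ requires careful bookkeeping in the presence of the isolated singularities of $P$ and of the normal data of the section $\widetilde X_t\cap \widetilde W$.
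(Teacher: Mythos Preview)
Your proposal is correct and follows essentially the same route as the paper's proof: both identify $\ker\Phi$ with $\ker\bigl(H_{m+2}(P;\mathbb Z)\to H_{m+2}(Q;\mathbb Z)\bigr)$ via Theorem~\ref{finv}(a) and diagram~(\ref{pqx}), then with the image of $H_{m+2}(G;\mathbb Z)$ in $H_{m+2}(P;\mathbb Z)$ via diagram~(\ref{nndzero}), and finally read this image through Corollary~\ref{ndecscoppdue} as $\{(-\mu_{m+2}(x),\nu_{m+2}(x))\}$; the kernel and intersection statements then follow by the same diagram chase you give. Your explicit verification that $\Phi(w,y)=\psi(w)+\alpha(y)$ is in fact the content of Corollary~\ref{nimp}, which the paper states without proof, so you are supplying a detail the paper leaves implicit rather than taking a different approach.
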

\begin{proof} By Theorem \ref{finv} and (\ref{pqx}) we see that the maps
$H_{m+2}(P;\mathbb Z)\to H^{m}(X_t;\mathbb Z)$ and
$H_{m+2}(P;\mathbb Z)\to H_{m+2}(Q;\mathbb Z)$ have the same
kernel. Therefore, by (\ref{nndzero}), Corollary
\ref{ndecscoppdue}, Corollary \ref{nimp} and Lemma \ref{semid}, we
deduce a natural isomorphism
$$
H^{m}(Y;\mathbb Z)+ H_{m}(X;\mathbb Z)_W\cong\left[
H^{m}(Y;\mathbb Z)\oplus H_{m}(W;\mathbb Z)\right]\slash
H_{m+2}(G;\mathbb Z)
$$
where the inclusion $H_{m+2}(G;\mathbb Z)\subseteq H^{m}(Y;\mathbb
Z)\oplus H_{m}(W;\mathbb Z)$ is defined via the map $x\to
(\nu_{m+2}'(x),-\mu_{m+2}(x))$ (here $\nu_{m+2}'$ denotes the map
$\nu_{m+2}$ composed with Poincar\'e duality
$H_{m+2}(Y;\mathbb{Z})\cong H^{m}(Y;\mathbb{Z})$). So the kernel
of the push-forward $H_{m}(W;\mathbb Z)\to H_{m}(X_t;\mathbb Z)$
identifies with the kernel of the map
$$
w\in H_{m}(W;\mathbb Z)\to [(0,w)]\in \left[ H^{m}(Y;\mathbb
Z)\oplus  H_{m}(W;\mathbb Z)\right] \slash  H_{m+2}(G;\mathbb Z).
$$
Now if $[(0,w)]= [(0,0)]$ then there is $x\in H_{m+2}(G;\mathbb
Z)$ such that $\nu_{m+2}'(x)=0$ and $w=-\mu_{m+2}(x)$. This means
that the inclusion (\ref{inuno}) is also surjective.

As for the intersection, first observe that the inclusion
(\ref{indue}) identifies with the image in $H^{m}(Y;\mathbb Z)\cap
H_{m}(X;\mathbb Z)_W$ of the following map:
\begin{equation}\label{map}
x\in H_{m+2}(G;\mathbb Z)\to[ (\nu_{m+2}'(x),0)]\in
H^{m}(Y;\mathbb Z)\cap H_{m}(X;\mathbb Z)_W.
\end{equation}
If $[ (y,0)]\in H^{m}(Y;\mathbb Z)\cap H_{m}(X;\mathbb Z)_W$ then
for some $w\in H_{m}(X;\mathbb Z)_W$ and $x\in H_{m+2}(G;\mathbb
Z)$ we have $(y,0)=(0,w)+(\nu_{m+2}'(x),-\mu_{m+2}(x))$, hence
$y=\nu_{m+2}'(x)$. This proves that the map (\ref{map}) is
surjective, so also (\ref{indue}) is.
\end{proof}

\bigskip
\section{The proof of Theorem \ref{maintheorem}.}

\begin{notations}\label{duenotazioni}
(i) Let $Y\subseteq \Ps^N$  be a smooth complex projective variety
of dimension $m+1=2r+1\geq 3$, $Z$ be a closed subscheme of $Y$,
and $\delta$ be a positive integer such that $\mathcal
I_{Z,Y}(\delta)$ is generated by global sections. Assume that for
any $k\geq \delta$ the general divisor in $|H^0(Y,\ic_{Z,Y}(k))|$
is smooth. This is equivalent to say  that $Z$ verifies condition
(0.1) in \cite{OS}. In particular $2\dim Z\leq m$.

\medskip
(ii) Let $X,G_1,\dots,G_r$ be general divisors  with $X \in
|H^0(Y,\ic_{Z,Y}(d))|$ and $G_l\in |H^0(Y,\ic_{Z,Y}(k_l))|$ for
$1\leq l\leq r$. Assume that $d>k_l\geq \delta$ for any $1\leq
l\leq r$. By (\cite{OS}, 1.2. Theorem) we know that $X,G_1,\dots,
G_r$ is a regular sequence, verifying the following conditions for
any $2\leq l\leq r$:
$$
\dim\,{\text{Sing}}(G_1\cap \dots\cap G_l)\leq l-2
\quad{\text{and}}\quad \dim\,{\text{Sing}}(X\cap G_1\cap \dots\cap
G_l)\leq l-1.
$$
Put $\Delta:=X\cap G_1\cap\dots\cap G_r$. Hence $\Delta$ is a
complete intersection of dimension $r$ containing $Z$. Denote by
$C_1,\dots,C_{\omega}$ the irreducible components of $\Delta$.
Observe that also $\Delta$ verifies condition (0.1) in \cite{OS}.
Put $W:=X\cap G_1$.

\medskip
(iii) Let $I_{Z}(\mathbb Z)$  be the subgroup  of the invariant
cocycles in $H^m(X_t;\mathbb Z)$  with respect to the monodromy
representation on $H^m(X_t;\mathbb Z)$  for the family of smooth
divisors $X_t\in |H^0(Y,\mathcal O_Y(d))|$ containing ${Z}$.
Denote by $H^m(X_t;\mathbb Z)_{Z}$ the image of $H_m(Z;\mathbb Z)$
in $H^m(X_t;\mathbb Z)$ via the natural map $H_m(Z;\mathbb Z)\to
H_m(X_t;\mathbb Z)\cong H^m(X_t;\mathbb Z)$. One may give a
similar definition for $I_{\Delta}(\mathbb Z)$, $H^m(X_t;\mathbb
Z)_{\Delta}$, $I_{W}(\mathbb Z)$, and $H^m(X_t;\mathbb Z)_{W}$,
and also with $\mathbb Q$ instead of $\mathbb Z$. Observe that
since $Z\subseteq \Delta\subseteq W$ then the monodromy group of
the family of smooth divisors in $|H^0(Y,\mathcal O_Y(d))|$
containing $W$ is a subgroup of the monodromy group of the family
of smooth divisors in $|H^0(Y,\mathcal O_Y(d))|$ containing
$\Delta$, which in turn is a subgroup of the monodromy group of
the family of smooth divisors in $|H^0(Y,\mathcal O_Y(d))|$
containing $Z$. Therefore we have
\begin{equation}\label{scatola}
I_{Z}(\mathbb Z)\subseteq I_{\Delta}(\mathbb Z)\subseteq
I_{W}(\mathbb Z).
\end{equation}
Denote by $V_{\Delta}:=I_{\Delta}(\mathbb Q)^{\perp}$  the
orthogonal complement of $I_{\Delta}(\mathbb Q)$  in
$H^m(X_t;\mathbb Q)$.

\medskip
(iv) For any $1\leq l\leq r-1$ fix general divisor $H_l\in
|H^0(Y,\mathcal O_Y(\mu_{l}))|$, with $0\ll \mu_1\ll\dots\ll
\mu_{r-1}$, and for any $0\leq l\leq r-1$ define
$(Y_l,Z_l,X_l,W_l,\Delta_l)$ as follows. For $l=0$ put
$(Y_0,Z_0,X_0,W_0,\Delta_0):=(Y,Z,X,W,\Delta)$. For $1\leq l\leq
r-1$ put $Y_l:=G_1\cap\dots\cap G_l\cap H_1\cap\dots\cap H_l$,
$Z_l:=Z\cap H_1\cap\dots\cap H_l$, $X_l:=X\cap Y_l$, $W_l:=X\cap
Y_l\cap G_{l+1}$, and $\Delta_l:=\Delta\cap Y_l$. Notice that
$\dim\,Y_{r-1}=3$ and that $\Delta_{r-1}=W_{r-1}$.
\end{notations}

\begin{lemma}
\label{gmp} Let $X \in |H^0(Y,\ic_{Z,Y}(d))|$ be a general divisor
containing $Z$. Then  the Gysin map $\mu_{m+1}:
H_{m+1}(G_1;\mathbb Z)\to H_{m-1}(W;\mathbb Z)$ is injective, and
therefore one has $I_W(\mathbb Z)=H^m(Y;\mathbb Z)+H^m(X_t;\mathbb
Z)_W$.
\end{lemma}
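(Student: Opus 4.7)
The plan is to follow the cohomological strategy of Lemma \ref{semid}, but to address the middle-dimensional case by using the base locus $Z$ in a second, quasi-projective Lefschetz-with-singularities argument. Via Poincar\'e duality on the smooth variety $G_1$, I will identify $\mu_{m+1}: H_{m+1}(G_1;\mathbb Z)\to H_{m-1}(W;\mathbb Z)$ with the pull-back $H^{m-1}(G_1;\mathbb Z)\to H^{m-1}(W;\mathbb Z)$ followed by the duality morphism $\alpha\mapsto \alpha\frown [W]$ on $W$. Since $\dim \text{Sing}(W)=0$, the Borel-Moore long exact sequence associated to the closed inclusion $\text{Sing}(W)\subset W$ gives an injection $H_{m-1}(W;\mathbb Z)\hookrightarrow H_{m-1}^{BM}(W\setminus\text{Sing}(W);\mathbb Z)$, and Poincar\'e duality on the smooth quasi-projective variety $W\setminus\text{Sing}(W)$ turns the target into $H^{m-1}(W\setminus\text{Sing}(W);\mathbb Z)$. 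A short verification shows that the composition of the duality morphism with this injection is precisely the open restriction $H^{m-1}(W;\mathbb Z)\to H^{m-1}(W\setminus\text{Sing}(W);\mathbb Z)$. Consequently, the injectivity of $\mu_{m+1}$ is equivalent to the injectivity of the restriction $H^{m-1}(G_1;\mathbb Z)\to H^{m-1}(W\setminus\text{Sing}(W);\mathbb Z)$.

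The key extra input is the generality of $X$ and $G_1$ in their linear systems with base locus $Z$: by \cite{OS} one has $\text{Sing}(W)\subseteq Z$, and in particular $W\setminus Z\subseteq W\setminus\text{Sing}(W)$. It is therefore enough to show that the further restriction $H^{m-1}(G_1;\mathbb Z)\to H^{m-1}(W\setminus Z;\mathbb Z)$ is injective, and I will factor this map as
$$
H^{m-1}(G_1;\mathbb Z)\stackrel{\alpha}{\longrightarrow} H^{m-1}(G_1\setminus Z;\mathbb Z)\stackrel{\beta}{\longrightarrow} H^{m-1}(W\setminus Z;\mathbb Z)
$$
and prove the injectivity of $\alpha$ and $\beta$ separately.

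The injectivity of $\alpha$ follows from a purity argument: since $\dim_{\mathbb C} Z\le r$ and $\dim_{\mathbb C} G_1=m=2r$, the closed analytic subset $Z$ has real codimension at least $m$ in $G_1$, whence the local cohomology $H^{m-1}_Z(G_1;\mathbb Z)$ vanishes. For $\beta$, I will invoke Hamm's Lefschetz theorem with singularities (\cite{Hamm}, \cite{GMP3}) applied to the pair $(G_1\setminus Z, W\setminus Z)$: both terms are smooth quasi-projective, and the open complement $(G_1\setminus Z)\setminus(W\setminus Z)=G_1\setminus W$ is affine of complex dimension $m$, since $W$ is cut out on $G_1$ by the ample divisor $X$. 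Hence the pair is $(m-1)$-connected, and $\beta$ is injective in degree $m-1$. Chaining the two injections yields the injectivity of $\mu_{m+1}$, and the equality $I_W(\mathbb Z)=H^m(Y;\mathbb Z)+H^m(X_t;\mathbb Z)_W$ follows immediately from Proposition \ref{semi}. The main technical point, I expect, will be the careful application of Hamm's theorem in this quasi-projective form with integer coefficients, since the classical Lefschetz with singularities applied directly to $(G_1,W)$ only produces injectivity for $H^{m-1}(G_1)\to H^{m-1}(W)$ and leaves open the possibility that torsion classes are killed by the duality morphism of the singular variety $W$; it is precisely the further open restriction to $W\setminus Z$ that eliminates this obstruction.
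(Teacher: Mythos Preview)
Your proposal is correct and follows essentially the same route as the paper's proof: both factor the problem through $H^{m-1}(W\setminus\text{Sing}(W);\mathbb Z)$ via Borel--Moore homology, use $\text{Sing}(W)\subseteq Z$ to pass to $W\setminus Z$, and split the restriction $H^{m-1}(G_1;\mathbb Z)\to H^{m-1}(W\setminus Z;\mathbb Z)$ into the two maps $\alpha$ and $\beta$, handled respectively by a codimension argument and by Lefschetz with singularities; the final appeal to Proposition~\ref{semi} is identical. The only difference is one of phrasing: for the injectivity of $\beta$ the paper invokes the very ampleness of $|H^0(Y,\ic_{Z,Y}(d))|$ on $G_1\setminus Z$ (which is the hypothesis under which \cite{GMP3}, p.~199, is stated for a general member of the system), whereas you point to the affineness of $G_1\setminus W$. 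Be aware that affineness of the complement by itself yields Lefschetz for the \emph{compact} pair $(G_1,W)$, not directly for the quasi-projective pair $(G_1\setminus Z,W\setminus Z)$ --- there is no excision available since $Z$ has empty interior in $G_1$ --- so when you write this up you should make explicit that you are applying the quasi-projective Lefschetz theorem of \cite{Hamm}/\cite{GMP3} to $G_1\setminus Z$ embedded by the given linear system, with $X$ general.
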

\begin{proof} Since $|H^0(Y,\ic_{Z,Y}(d))|$ is very ample on
$G_1\backslash Z$, then by Lefschetz Theorem with Singularities
(\cite{GMP3}, p. 199) we know that the pull-back
$H^{m-1}(G_1\backslash {Z};\mathbb Z)\to H^{m-1}(W\backslash
{Z};\mathbb Z)$ is injective for a general
$X\in|H^0(Y,\ic_{Z,Y}(d))|$ (recall that $W=G_1\cap X$). Moreover
by Bertini Theorem we have ${\text{Sing}(W)}\subseteq Z$, and
therefore $W\backslash Z\subseteq W\backslash {\text{Sing}(W)}$.
So we may consider the following natural commutative diagram
$$
\begin{array}{ccccc}
H^{m-1}(G_1;\mathbb Z)&\stackrel{}\rightarrow  & H^{m-1}(W\backslash {\text{Sing}(W)};\mathbb Z)  \\
\stackrel {}{}\downarrow &  &\stackrel {}{}\downarrow  \\
H^{m-1}(G_1\backslash Z;\mathbb Z) &\stackrel{}\hookrightarrow  & H^{m-1}(W\backslash Z;\mathbb Z)\\
\end{array}
$$
where all maps are pull-back. Since also $H^{m-1}(G_1;\mathbb
Z)\to H^{m-1}(G_1\backslash Z;\mathbb Z)$ is injective
(\cite{Dimca2}, Theorem 5.4.12, p. 162), we deduce the injectivity
of  $H^{m-1}(G_1;\mathbb Z)\rightarrow H^{m-1}(W\backslash
{\text{Sing}(W)};\mathbb Z)$. On the other hand, from Borel-Moore
homology exact sequence $ 0=H_{m-1}^{BM}({\text{Sing}(W)};\mathbb
Z)\to H_{m-1}(W;\mathbb Z) \to H_{m-1}^{BM}(W\backslash
{\text{Sing}(W)};\mathbb Z)$  we see that $H_{m-1}(W;\mathbb
Z)\subseteq H_{m-1}^{BM}(W\backslash {\text{Sing}(W)};\mathbb
Z)\cong H^{m-1}(W\backslash {\text{Sing}(W)};\mathbb Z)$
(\cite{Fulton2}, p. 217, (26)). So the injective map
$H_{m+1}(G_1;\mathbb Z)\cong H^{m-1}(G_1;\mathbb Z)\rightarrow
H^{m-1}(W\backslash {\text{Sing}(W)};\mathbb Z)$ factors through
$\mu_{m+1}$:
$$
\begin{array}{ccccc}
 H_{m-1}(W;\mathbb Z)&&  \\
\stackrel {\mu_{m+1}}{}\uparrow &\searrow & & \\
 H_{m+1}(G_1;\mathbb Z) & \hookrightarrow & H^{m-1}(W\backslash {\text{Sing}(W)};\mathbb
 Z).
 & &\\
\end{array}
$$
Last claim now follows by Proposition \ref{semi}.
\end{proof}

\begin{theorem}\label{thmb}
Let $X\in |H^0(Y,\mathcal O_Y(d))|$ be a smooth divisor containing
$\Delta$. Then we have:

\smallskip
(a) $H^m(X;\mathbb Z)_{\Delta}$ is freely generated by
$C_1,\dots,C_{\omega}$;

\smallskip
(b) $I_{\Delta}(\mathbb Z)=H^m(Y;\mathbb Z)+H^m(X;\mathbb
Z)_{\Delta}=H^m(Y;\mathbb Z)+H^m(X;\mathbb Z)_{W}$, and
 the monodromy representation on $V_{\Delta}$ for the family of
smooth divisors in $|H^0(Y,\mathcal O_Y(d))|$ containing $\Delta$
is irreducible;

\smallskip
(c) $\dim \left[H^m(Y;\mathbb Q)\cap H^m(X;\mathbb
Q)_{\Delta}\right]=1$.
\end{theorem}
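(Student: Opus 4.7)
I plan to argue by induction on $r$, using the stripping procedure from Notations \ref{duenotazioni} (iv): passing from $(Y,Z,X,W,\Delta)$ to $(Y_1,Z_1,X_1,W_1,\Delta_1)$ by intersecting with the generic divisor $H_1$ drops $r$ by one and $m$ by two, and the irreducible components of $\Delta_1$ correspond bijectively to those of $\Delta$ via $C_i\mapsto C_i\cap H_1$. In the base case $r=1$ one has $m=2$ and $\Delta=W$, so (b) is immediate from Lemma \ref{gmp} together with (\ref{scatola}); (c) follows from Proposition \ref{semit}, which yields $H^m(Y;\mathbb{Q})\cap H^m(X;\mathbb{Q})_W\cong H^{m-2}(Y;\mathbb{Q})=H^0(Y;\mathbb{Q})\cong \mathbb{Q}$; the irreducibility of the monodromy on $V_W$ is the content of \cite{DGF}, Theorem 3.1; and (a) amounts to the linear independence of distinct irreducible curves on the smooth surface $X$ in $H^2(X;\mathbb{Z})$, which I verify by pairing each $C_i$ with a test divisor chosen so as to have positive intersection with $C_i$ but vanish on every other $C_j$.

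For the inductive step, I first prove (a). By the inductive hypothesis applied to $(Y_1,Z_1,X_1,\Delta_1)$, the cycles $[C_i\cap H_1]\in H^{m-2}(X_1;\mathbb{Z})$ form a free system of rank $\omega$. The restriction-type map $H^m(X;\mathbb{Z})\to H^{m-2}(X_1;\mathbb{Z})$, obtained by cap-product with $[Y_1]_{|X}$ followed by Poincar\'e duality, sends $[C_i]$ to $[C_i\cap H_1]$; hence any integral relation $\sum a_i[C_i]=0$ in $H^m(X;\mathbb{Z})$ restricts to a vanishing relation on $X_1$, forcing all $a_i=0$. For (b), the second equality $H^m(Y;\mathbb{Z})+H^m(X;\mathbb{Z})_\Delta=H^m(Y;\mathbb{Z})+H^m(X;\mathbb{Z})_W$ is obtained by applying the Lefschetz theorem with singularities (\cite{GMP3}, \cite{Hamm}) to the pair $W\supset\Delta$: since $\Delta$ is cut out on $W$ by general divisors $G_2,\dots,G_r$, the pushforward $H_m(\Delta;\mathbb{Z})\to H_m(W;\mathbb{Z})$ is surjective modulo classes pulled back from $Y$. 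The first equality then follows by combining this identity with (\ref{scatola}) and Lemma \ref{gmp}. The irreducibility of the monodromy on $V_\Delta$ is deduced from the inductive irreducibility on $V_{\Delta_1}$: a nontrivial invariant subspace of $V_\Delta$ would restrict, via the Lefschetz--Gysin map associated to $X_1\subset X$, to a nontrivial invariant subspace of $V_{\Delta_1}$, contradicting induction once the injectivity of this restriction on $V_\Delta$ is confirmed via hard Lefschetz.

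For (c), the class $[\Delta]$ lies in $H^m(Y;\mathbb{Q})\cap H^m(X;\mathbb{Q})_\Delta$ and is nonzero by (a), giving a lower bound of $1$. Conversely, any class in the intersection restricts to the $1$-dimensional space $H^{m-2}(Y_1;\mathbb{Q})\cap H^{m-2}(X_1;\mathbb{Q})_{\Delta_1}$ on $X_1$ by induction; the injectivity of this restriction on the intersection space follows from hard Lefschetz applied to pullbacks from $Y$ together with part (a) at level $r$. The main obstacle is the irreducibility of monodromy on $V_\Delta$ together with the integral form of (b): moving from the well-understood $W$-family to the smaller $\Delta$-family without introducing spurious torsion, and transferring irreducibility through the Lefschetz reduction to $V_{\Delta_1}$, is where one must most carefully combine the refined Lefschetz-with-singularities results with the genericity of the auxiliary divisors $H_l$ and of the $G_l$ themselves.
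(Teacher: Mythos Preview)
Your treatment of (a) and (c) is close in spirit to the paper's (both use the Gysin restriction to $X_1$ and induction; for the base case the paper simply invokes Proposition \ref{semit} rather than constructing test divisors, which is safer since distinct irreducible curves on a smooth surface are not automatically independent in $H^2$).

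The serious gap is in (b). Your argument for the equality $H^m(Y;\mathbb{Z})+H^m(X;\mathbb{Z})_\Delta=H^m(Y;\mathbb{Z})+H^m(X;\mathbb{Z})_W$ appeals to Lefschetz with singularities for the pair $\Delta\subset W$, but that theorem controls cohomology only up to degree $\dim\Delta=r$, not $m=2r$; there is no reason for the push-forward $H_m(\Delta;\mathbb{Z})\to H_m(W;\mathbb{Z})$ to be surjective modulo $Y$-classes on these grounds. The paper's argument is substantively different. It does not compare $\Delta$ with $W$ directly inside $X$; instead it identifies $H_{m_l}(W_l;\mathbb{Z})$ with a subgroup of $H^{m_{l+1}}(X_{l+1};\mathbb{Z})$ one level \emph{down}, via the Gysin map for the hyperplane section $X_{l+1}=W_l\cap H_{l+1}$ (Theorem \ref{finv}(a) applied to $R=W_l$). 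One inclusion of (\ref{inclusion}) is formal; equality is then forced by observing that the Gysin image is globally monodromy-invariant (\cite{DGF}, Lemma 2.3), so a strict inclusion would, by the inductively known irreducibility of $V_{\Delta_{l+1}}$ together with torsion-freeness, make the image contain $V_{\Delta_{l+1}}$ and hence all of $H^{m_{l+1}}(X_{l+1};\mathbb{Q})$---contradicting the fact that $h^{m_{l+1}}(X_{l+1};\mathbb{Q})$ can be made arbitrarily large relative to $h_{m_l}(W_l;\mathbb{Q})$ by taking $\mu_{l+1}\gg 0$. This is precisely why the hypothesis $0\ll\mu_1\ll\dots\ll\mu_{r-1}$ appears in Notations \ref{duenotazioni}(iv); your outline never uses it.

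Your proposed proof of the irreducibility of $V_\Delta$ also does not work. The inclusion $X_1=X\cap G_1\cap H_1\subset X$ has codimension two, so hard Lefschetz does not yield injectivity of the restriction $H^m(X;\mathbb{Q})\to H^{m-2}(X_1;\mathbb{Q})$ on a primitive-type subspace such as $V_\Delta$; and the monodromy group acting on $H^{m-2}(X_1)$ (for divisors of degree $d$ on $Y_1$ containing $\Delta_1$) is not the one acting on $H^m(X)$, so invariance of a subspace need not transfer under restriction. In the paper irreducibility is not proved by this kind of induction at all: once the identity $I_{\Delta_l}(\mathbb{Q})=I_{W_l}(\mathbb{Q})$ has been obtained from the argument above, the irreducibility of $V_{\Delta_l}=I_{W_l}(\mathbb{Q})^\perp$ is quoted directly from \cite{DGF}, Theorem 1.1, which establishes it for the $W_l$-family.
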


\begin{proof} (a) Since $H_m(\Delta;\mathbb Z)$ is freely generated by
$C_1,\dots,C_{\omega}$ then to prove (a) is equivalent to prove
that the push-forward $H_{m}(\Delta;\mathbb Q)\to H_{m}(X;\mathbb
Q)$ is injective. When $r=1$ this follows by Proposition
\ref{semit}, because in this case $W=\Delta$. Now argue by
induction on $r\geq 2$. Since $\Delta_1=\Delta \cap H_1$, then
$\Delta$ and $\Delta_{1}$ have the same number of components.
Therefore the Gysin map $H_{m}(\Delta;\mathbb Q)\to
H_{m-2}(\Delta_1;\mathbb Q)$ is bijective, and its composition
$\varphi$ with the push-forward $H_{m-2}(\Delta_1;\mathbb Q)\to
H_{m-2}(X_1;\mathbb Q)$ is injective by induction. On the other
hand $\varphi$ is nothing but the composition of the push-forward
$H_{m}(\Delta;\mathbb Q)\to H_{m}(W;\mathbb Q)$ with the Gysin map
$H_{m}(W;\mathbb Q)\to H_{m-2}(X_1;\mathbb Q)$ (observe that
$X_1=W\cap H_1$). Hence the map $H_{m}(\Delta;\mathbb Q)\to
H_{m}(W;\mathbb Q)$ is injective, and so is the map
$H_{m}(\Delta;\mathbb Q)\to H_{m}(X;\mathbb Q)$ by Proposition
\ref{semit} again.

\smallskip
(b) Since $I_{W}(\mathbb Z)\supseteq I_{\Delta}(\mathbb Z)$ and
$I_{\Delta}(\mathbb Z)\supseteq H^m(Y;\mathbb Z)+H^m(X;\mathbb
Z)_{\Delta}$, by   Lemma \ref{gmp} it suffices to prove that
$H^m(X;\mathbb Z)_{W}$ $\subseteq H^m(Y;\mathbb Z)+H^m(X;\mathbb
Z)_{\Delta}$, and that $V_{\Delta}$ is irreducible. So it is
enough to show that for any $0\leq l\leq r-1$ one has
\begin{equation}\label{ninduzione}
H^{m_l}(X_l;\mathbb Z)_{W_l}\subseteq H^{m_l}(Y_l;\mathbb
Z)+H^{m_l}(X_l;\mathbb Z)_{{\Delta}_l}
\end{equation}
($m_l:=m-2l$), and that the monodromy representation on
$V_{\Delta_{l}}$ for the family of smooth divisors $X_l\in
|H^0(Y_l,\mathcal O_{Y_l}(d))|$ containing $\Delta_l$ is
irreducible. To this purpose we argue by decreasing induction on
$l$. When $l=r-1$ we have $\Delta_{r-1}=W_{r-1}$. In this case
(\ref{ninduzione}) is obvious, and the irreducibility of
$V_{\Delta_{r-1}}\,(=I_{W_{r-1}}(\mathbb Q )^{\perp})$ follows
from (\cite{DGF}, Theorem 1.1) because, with the same notation as
in (\cite{DGF}, Theorem 1.1), one has $I_{W_{r-1}}(\mathbb Q
)^{\perp}=H^{m_{r-1}}(X_{r-1};\mathbb Q)_{\perp
W_{r-1}}^{\text{van}}$ (compare with \cite{DGF}, p. 526, from line
27 to 38).

Now assume $0\leq l<r-1$. By induction and Lemma \ref{gmp} (with
$Y_{l+1}\cap G_{l+2}$ instead of $G_1$), we have
\begin{equation}\label{nninduzione}
I_{{W}_{l+1}}(\mathbb Z)=I_{{\Delta}_{l+1}}(\mathbb
Z)=H^{m_{l+1}}(Y_{l+1};\mathbb Z)+H^{m_{l+1}}(X_{l+1};\mathbb
Z)_{{\Delta}_{l+1}}.
\end{equation}
Since $X_{l+1}=W_l\cap H_{l+1}$, then the inclusion map
$i_{X_{l+1}}:X_{l+1}\to W_l$ defines a Gysin map
$i_{X_{l+1}}^{\star}:H_{m_l}(W_l;\mathbb Z)\to
H_{m_{l+1}}(X_{l+1};\mathbb Z)$. Moreover since
$\Delta_{l}\subseteq W_l$, $\Delta_{l+1}=\Delta_{l}\cap H_{l+1}$
(hence the Gysin map $H_{m_l}(\Delta_{l};\mathbb Z)\to
H_{m_{l+1}}(\Delta_{l+1};\mathbb Z)$ is bijective because both
groups are freely generated by the irreducible components), and by
Lefschetz Hyperplane Theorem we have $H^{m_{l+1}}(Y_{l};\mathbb
Z)\cong H^{m_{l+1}}(Y_{l+1};\mathbb Z)$, then
\begin{equation}\label{inclusion}
\Im\,(PD\circ i_{X_{l+1}}^{\star})\supseteq
H^{m_{l+1}}(Y_{l+1};\mathbb Z)+H^{m_{l+1}}(X_{l+1};\mathbb
Z)_{{\Delta}_{l+1}}
\end{equation}
($PD$ means \lq\lq Poincar\'e duality\rq\rq
$H_{m_{l+1}}(X_{l+1};\mathbb Z)\cong H^{m_{l+1}}(X_{l+1};\mathbb
Z)$). By (\cite{DGF}, Lemma 2.3)  $\Im\,(PD\circ
i_{X_{l+1}}^{\star})\otimes_{\mathbb Z} \mathbb Q$ is globally
invariant under the monodromy action on
$H^{m_{l+1}}(X_{l+1};\mathbb Q)$ for the family of smooth divisors
in $|H^0(Y_{l+1},\ic_{Z_{l+1},Y_{l+1}}(d))|$, therefore also for
the family of smooth divisors in
$|H^0(Y_{l+1},\ic_{{\Delta}_{l+1},Y_{l+1}}(d))|$. It follows that
previous inclusion (\ref{inclusion}) is actually an equality:
\begin{equation}\label{xinduzione}
\Im\,(PD\circ i_{X_{l+1}}^{\star})= H^{m_{l+1}}(Y_{l+1};\mathbb
Z)+H^{m_{l+1}}(X_{l+1};\mathbb Z)_{{\Delta}_{l+1}}.
\end{equation}
In fact, by Theorem \ref{finv}, (a), we know that
$H^{m_{l+1}}(X_{l+1};\mathbb Z)\slash I_{{W}_{l+1}}(\mathbb Z)$ is
torsion-free. Therefore if (\ref{inclusion}) would be strict, by
(\ref{nninduzione}) one would have $\left(\Im\,(PD\circ
i_{X_{l+1}}^{\star})\otimes_{\mathbb Z} \mathbb Q\right)\cap
V_{\Delta_{l+1}}\neq \{0\}$, and since $V_{\Delta_{l+1}}$ is
irreducible, it would follow that $H^{m_{l+1}}(X_{l+1};\mathbb
Q)=\Im\,(PD\circ i_{X_{l+1}}^{\star})\otimes_{\mathbb Z} \mathbb
Q$. This is impossible because, for $0\ll \mu_1\ll\dots\ll
\mu_{l+1}$, $h_{m_{l+1}}(X_{l+1};\mathbb Q)$ is arbitrarily large
with respect to $h_{m_l}(W_l;\mathbb Q)$. Again by Theorem
\ref{finv}, (a), (with $R:=W_l$), we know that
$i_{X_{l+1}}^{\star}$ is injective, hence $H_{m_l}(W_{l};\mathbb
Z)\cong \Im\,(PD\circ i_{X_{l+1}}^{\star})$. So from
(\ref{xinduzione}) we get (\ref{ninduzione}), for the  map
$H_{m_l}(W_{l};\mathbb Z)\to H^{m_l}(X_{l};\mathbb Z)$ sends
$H^{m_{l+1}}(X_{l+1};\mathbb Z)_{{\Delta}_{l+1}}$ in
$H^{m_{l}}(X_{l};\mathbb Z)_{{\Delta}_{l}}$ and
$H^{m_{l+1}}(Y_{l+1};\mathbb Z)$ in $H^{m_{l}}(Y_{l};\mathbb Z)$.
Finally note that by Proposition \ref{semi} and (\ref{ninduzione})
it follows $I_{\Delta_l}(\mathbb Q )=I_{W_l}(\mathbb Q )$. So the
irreducibility of $V_{\Delta_{l}}\,(=I_{W_l}(\mathbb Q )^{\perp})$
follows again from (\cite{DGF}, Theorem 1.1).

\smallskip
(c) Again we argue by decreasing induction on $l$. When $l=r-1$
previous equality follows by Proposition \ref{semit} because in
this case $\dim Y_{r-1}=3$ and $\Delta_{r-1}=W_{r-1}$. Now assume
$0\leq l< r-1$. By (a) the Gysin map $H_{m_{l}}(\Delta_{l};\mathbb
Q)\to H_{m_{l+1}}(\Delta_{l+1};\mathbb Q)$ induces an isomorphism
$\varphi: H^{m_{l}}(X_{l};\mathbb Q)_{\Delta_{l}}\to
H^{m_{l+1}}(X_{l+1};\mathbb Q)_{\Delta_{l+1}}$. Suppose that $\xi
\in H^{m_{l}}(Y_{l};\mathbb Q)\cap H^{m_{l}}(X_{l};\mathbb
Q)_{\Delta_{l}}$. Then $\xi \in H^{m_{l}}(Y_{l};\mathbb Q)\cap
H^{m_{l}}(X_{l};\mathbb Q)_{W_{l}}$. By Proposition \ref{semit}
$\xi$ comes from some $\eta\in H^{m_{l+1}}(Y_{l};\mathbb Q)\cong
H^{m_{l+1}}(Y_{l+1};\mathbb Q)$. Therefore pulling-back $\eta$ in
$H^{m_{l+1}}(X_{l+1};\mathbb Q)$ we get $\varphi(\xi)$. This
proves that $\varphi$ sends isomorphically
$H^{m_{l}}(Y_{l};\mathbb Q)\cap H^{m_{l}}(X_{l};\mathbb
Q)_{\Delta_{l}}$ onto $H^{m_{l+1}}(Y_{l+1};\mathbb Q)\cap
H^{m_{l+1}}(X_{l+1};\mathbb Q)_{\Delta_{l+1}}$.
\end{proof}

\medskip
\begin{remark} From (a), (b) (with $\mathbb Q$ instead of $\mathbb Z$)
of previous Theorem \ref{thmb}, and Proposition \ref{semit}, we
deduce that $\dim \left[H^m(Y;\mathbb Q)\cap H^m(X;\mathbb
Q)_{\Delta}\right]=\omega - h^m(W;\mathbb Q)+h^{m-2}(Y;\mathbb
Q)=1$.
\end{remark}

\medskip
We are in position to prove our main result.

\begin{proof}[Proof of Theorem \ref{maintheorem}]
(a) Let $G_1,\dots, G_r$ be general divisors in $|H^0(Y,\mathcal
O_Y(\delta))|$ containing $Z$, and put $\Delta:=X\cap
G_1\cap\dots\cap G_r$. Hence $\Delta$ is a complete intersection
of dimension $r$ containing $Z$, and then (a) follows by (a) of
previous Theorem \ref{thmb}.

\smallskip (b) By (\cite{OS}, 1.2. Theorem) we know that $\Delta\backslash Z$ is
smooth and connected. Observe that $\Delta\neq Z_1\cup\dots\cup
Z_{\rho}$, otherwise $Z=\Delta$ and this is in contrast with the
assumption that $\mathcal I_{Z,Y}(\delta)$ is generated by global
sections. Therefore, apart from $Z_1,\dots ,Z_{\rho}$, $\Delta$
has a unique residual irreducible component $C$, and we have:
\begin{equation}\label{componenti}
\Delta = Z_1 +\dots + Z_{\rho}+C=\delta^r H_X^r\in H^m(X;\mathbb
Z).
\end{equation}
By (\ref{componenti}), and (a) and (c) of Theorem \ref{thmb}, we
deduce $\dim \left[H^m(Y;\mathbb Q)\cap H^m(X;\mathbb
Q)_{Z}\right]$ $=0$. Hence, taking into account that
$H^m(X;\mathbb Z)_Z\subseteq NS_r(X;\mathbb Z)$,  in order to
prove (b) it suffices to prove that
\begin{equation}\label{oldb}
NS_r(X;\mathbb Z)\subseteq  H^m(Y;\mathbb Z)+H^m(X;\mathbb Z)_Z.
\end{equation}
To this aim, first notice that again by (\ref{componenti}) we have
$H^m(Y;\mathbb Z)+H^m(X;\mathbb Z)_{\Delta}=H^m(Y;\mathbb
Z)+H^m(X;\mathbb Z)_{Z}$, which is contained in $I_{Z}(\mathbb
Z)$. Therefore, by (\ref{scatola}), Lemma \ref{gmp} and Theorem
\ref{thmb}, we get
$$
I_{Z}(\mathbb Z)=I_{\Delta}(\mathbb Z)=I_{W}(\mathbb
Z)=H^m(Y;\mathbb Z)+H^m(X;\mathbb Z)_{Z}
$$
($W:=X\cap G_1$). Hence ${I_{Z}(\mathbb Q)}^{\perp}$
($=V_{\Delta}$) is irreducible, and the vanishing cohomology
$H^m(Y;\mathbb Q)^{\perp}$ of $X$ is contained in ${I_{Z}(\mathbb
Q)}^{\perp}+ \,H^{m/2,m/2}(X;\mathbb Q)$. So, as $H^m(Y;\mathbb
Q)^{\perp}$, also ${I_{Z}(\mathbb Q)}^{\perp}$ is not of pure
Hodge type $(m/2,m/2)$. By a standard argument (compare e.g. with
\cite{IJM2}, proof of Theorem 1.1) it follows that $NS_r(X;\mathbb
Q)\subseteq I_{Z}(\mathbb Q)$, from which we get (\ref{oldb})
because $H^{m}(X;\mathbb Z)\slash I_{W}(\mathbb Z)$ is
torsion-free (Theorem \ref{finv}, (a)).

\smallskip (c) In view of (b), to prove (c) is equivalent to prove
that
\begin{equation}\label{equivalent}
NS_r(X;\mathbb Q)\cap H^m(Y;\mathbb Q)=NS_{r+1}(Y;\mathbb Q).
\end{equation}
This is certainly true when $\dim Y=3$ in view of Lefschetz
Theorem on $(1,1)$-Classes (\cite{GH2}, p. 163). So we may assume
$m\geq 4$ and argue by induction on $\dim Y=m+1$. First note that
\begin{equation}\label{ypiuw}
NS_{r}(X;\mathbb Q)= NS_{r+1}(Y;\mathbb Q)+NS_{r}(W;\mathbb Q)
\end{equation}
(observe that $NS_{r}(W;\mathbb Q)\subseteq NS_{r}(X;\mathbb Q)$
by Proposition \ref{semit}). In fact by  Theorem \ref{finv} we
know that $NS_{r+1}(Q;\mathbb Q)$ maps onto $NS_{r}(X;\mathbb Q)$.
On the other hand $NS_{r+1}(P;\mathbb Q)$ maps onto
$NS_{r+1}(Q;\mathbb Q)$ because $P$ is the blowing-up of $Q$ at
the vertex, and moreover we have $NS_{r+1}(P;\mathbb Q)=
NS_{r+1}(Y;\mathbb Q)\oplus NS_{r}(W;\mathbb Q)$ for $P$ is also
the blowing-up of $Y$ at $W$ (\cite{Fulton}, Proposition 6.7, (e), p. 115).
Since $NS_{r}(W;\mathbb Q)=\left[NS_r(W;\mathbb Q)\cap
H^m(Y;\mathbb Q)\right]\oplus H^m(X;\mathbb Z)_{Z}$, by
(\ref{ypiuw}) it follows that in order to prove (\ref{equivalent})
(i.e. (c)) it suffices to prove that
\begin{equation}\label{nypiuw}
NS_r(W;\mathbb Q)\cap H^m(Y;\mathbb Q)=NS_r(W;\mathbb Q)\cap
NS_{r+1}(Y;\mathbb Q).
\end{equation}
To this purpose first observe that by Proposition \ref{semit} we
have:
$$
NS_r(W;\mathbb Q)\cap_{H^m(X;\mathbb Q)} H^m(Y;\mathbb
Q)=NS_r(W;\mathbb Q)\cap_{H^m(X;\mathbb Q)} H^{m-2}(Y;\mathbb Q)
$$
and so
$$
NS_r(W;\mathbb Q)\cap_{H^m(X;\mathbb Q)} H^{m-2}(Y;\mathbb
Q)=NS_r(W;\mathbb Q)\cap_{H_m(W;\mathbb Q)} H^{m-2}(Y;\mathbb Q).
$$
Now consider the following natural commutative diagram:
$$
\begin{array}{ccccc}
H_{m+2}(G_1;\mathbb Q)&\stackrel{}\to & H_m(W;\mathbb Q)\\
\stackrel {}{} \downarrow & & \stackrel {}{}\downarrow &\\
H_{m}(Y_1;\mathbb Q)& \stackrel{}{\to}&
H_{m-2}(X_1;\mathbb Q),\\
\end{array}
$$
where all the maps are Gysin's. It induces the following
commutative diagram:
$$
\begin{array}{ccccc}
NS_{r+1}(G_1;\mathbb Q)&\stackrel{\alpha}\to & NS_r(W;\mathbb Q)\cap_{H_m(W;\mathbb Q)} H^{m-2}(Y;\mathbb Q)\\
\stackrel {\alpha_1}{} \downarrow & & \stackrel {\beta}{}\downarrow &\\
NS_r(Y_1;\mathbb Q)& \stackrel{\beta_1}{\to}&
NS_{r-1}(X_1;\mathbb Q)\cap_{H^{m-2}(X_1;\mathbb Q)} H^{m-2}(Y_1;\mathbb Q).\\
\end{array}
$$
The map $\alpha_1$ is an isomorphism by Theorem \ref{finv}. The
map $\beta_1$ is an isomorphism by induction (compare with
(\ref{equivalent})). The map $\beta$ is an isomorphism by Theorem
\ref{finv} applied to $R:=W$ which only has isolated
singularities. It follows that:
$$
NS_r(W;\mathbb Q)\cap_{H_m(W;\mathbb Q)} H^{m-2}(Y;\mathbb
Q)=NS_{r+1}(G_1;\mathbb Q).
$$
We deduce:
$$
NS_{r+1}(G_1;\mathbb Q)=NS_r(W;\mathbb Q)\cap_{H_m(W;\mathbb Q)}
H^{m-2}(Y;\mathbb Q)
$$
$$
=NS_r(W;\mathbb Q)\cap_{H^m(X;\mathbb Q)} H^{m}(Y;\mathbb
Q)\supseteq NS_r(W;\mathbb Q)\cap_{H^m(X;\mathbb Q)}
NS_{r+1}(Y;\mathbb Q).
$$
But we also have
$$
NS_r(W;\mathbb Q)\cap_{H^m(X;\mathbb Q)} NS_{r+1}(Y;\mathbb
Q)\supseteq NS_{r+1}(G_1;\mathbb Q)
$$
because the cycles of $NS_{r+1}(G_1;\mathbb Q)$  pass through
$NS_{r+1}(Y;\mathbb Q)$ via push-forward $NS_{r+1}(G_1;\mathbb
Q)\subseteq H_{m+2}(G_1;\mathbb Q)\to H_{m+2}(Y;\mathbb Q)$. This
proves (\ref{nypiuw}), hence (c).
\end{proof}

\begin{corollary}
\label{coro1} With the same assumptions as in Theorem
\ref{maintheorem}, suppose also that $\dim Y=3$. Then we have:
$$
Pic(X)= Pic(Y) \oplus H^2(X;\mathbb Z)_Z.
$$
\end{corollary}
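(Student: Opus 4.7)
Since $\dim Y = 3$ we have $r=1$ and $m=2$, so $X$ is a smooth surface and $NS_1(X;\mathbb Z)$ is the classical N\'eron--Severi group $NS(X)=\Im(Pic(X)\to H^2(X;\mathbb Z))$. The plan is first to upgrade Theorem \ref{maintheorem}(b) to the integral identity
\[
NS(X)=NS(Y)\oplus H^2(X;\mathbb Z)_Z,
\]
and then to lift this splitting across the exponential sequence.

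For the integral identification $NS(X)\cap H^2(Y;\mathbb Z)=NS(Y)$ (inside $H^2(X;\mathbb Z)$) I would use the Lefschetz $(1,1)$-theorem twice. If $\alpha\in H^2(Y;\mathbb Z)$ has pullback in $NS(X)$, then Lefschetz $(1,1)$ on $X$ makes $\alpha|_X$ of Hodge type $(1,1)$; since $H^2(Y;\mathbb C)\hookrightarrow H^2(X;\mathbb C)$ is an injective morphism of pure Hodge structures by Lefschetz Hyperplane Theorem, $\alpha$ is itself $(1,1)$ on $Y$, hence algebraic by Lefschetz $(1,1)$ on $Y$. Combined with Theorem \ref{maintheorem}(b), this yields the displayed integral splitting.

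To pass from $NS$ to $Pic$, I would invoke Lefschetz Hyperplane Theorem applied to $H^1$, which gives $H^1(Y;\mathbb Z)\cong H^1(X;\mathbb Z)$ and, via the Hodge decomposition, $H^1(Y;\mathcal O_Y)\cong H^1(X;\mathcal O_X)$; consequently pullback induces an isomorphism $Pic^0(Y)\cong Pic^0(X)$. Each irreducible component $Z_i$ of $Z$ of dimension one is a curve on $X$, and $(n_i)\mapsto \sum_i n_i\mathcal O_X(Z_i)$ lifts the inclusion $H^2(X;\mathbb Z)_Z\hookrightarrow NS(X)$ to a homomorphism $\mathbb Z^{\rho}\to Pic(X)$, where $\rho$ is the rank appearing in Theorem \ref{maintheorem}(a).

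Finally, a diagram chase on the commutative square obtained from the short exact sequences $0\to Pic^0(?)\to Pic(?)\to NS(?)\to 0$ (for $X$ on one side, and for the sum of the corresponding sequence for $Y$ with the trivial sequence $0\to 0\to \mathbb Z^{\rho}\to \mathbb Z^{\rho}\to 0$ on the other) concludes: its outer vertical arrows $Pic^0(Y)\to Pic^0(X)$ and $NS(Y)\oplus H^2(X;\mathbb Z)_Z\to NS(X)$ are both isomorphisms, so the middle arrow $Pic(Y)\oplus H^2(X;\mathbb Z)_Z\to Pic(X)$ is too. The only delicate step is the integral intersection $NS(X)\cap H^2(Y;\mathbb Z)$: Theorem \ref{maintheorem}(c) treats this only with rational coefficients, but in dimension three Lefschetz $(1,1)$ disposes of the torsion obstruction cleanly.
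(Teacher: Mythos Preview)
Your proof is correct and follows essentially the same approach as the paper: first use the Lefschetz $(1,1)$-theorem (together with the injectivity of $H^2(Y)\to H^2(X)$ as a morphism of Hodge structures) to identify $NS_1(X;\mathbb Z)\cap H^2(Y;\mathbb Z)$ with $NS_2(Y;\mathbb Z)$, upgrading Theorem~\ref{maintheorem}(b) to an integral splitting, and then lift to Picard groups via the exponential sequence and the Lefschetz Hyperplane Theorem. The paper simply cites \cite{BS} for the lifting step where you spell out the $Pic^0$-comparison and the five-lemma argument, but the content is the same.
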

\begin{proof} If $\dim Y=3$ then  $m=2$, $r=1$,
and by Lefschetz Theorem on $(1,1)$-Classes we have
$NS_2(Y;\mathbb Z)\subseteq NS_1(X;\mathbb Z)\cap H^2(Y;\mathbb
Z)\subseteq H^{1,1}(Y;\mathbb Z)=NS_2(Y;\mathbb Z)$. So we get $
NS_1(X;\mathbb Z)= NS_2(Y;\mathbb Z) \oplus H^2(X;\mathbb Z)_{Z}$.
And this identification lifts to Picard groups in view of
Lefschetz Hyperplane Theorem and the exponential sequence (compare
with the proof of Corollary 2.3.4, p. 51, in \cite{BS}).
\end{proof}

{\bf{Aknowledgements}}

We would like to thank Ciro Ciliberto and Claudio Murolo for valuable discussions and
suggestions on the subject of this paper.

\end{document}